\theoremstyle{definition}
\newtheorem{thm}{Theorem}
\newtheorem*{thm*}{Theorem}
\newtheorem{lem}[thm]{Lemma}
\newtheorem{rem}[thm]{Remark}
\numberwithin{equation}{section}
\numberwithin{thm}{section}
\newcommand\myshade{85}
\colorlet{mylinkcolor}{red}
\colorlet{mycitecolor}{blue}
\colorlet{myurlcolor}{Aquamarine}
\newcommand{\la}[1]{\mathfrak{#1}}
\newcommand{\ZZ}{\mathbb{Z}}
\newcommand{\ch}{\mathsf{ch}}
\newcommand{\fin}{\mathrm{final}}
\begin{document}

\date{}

\title{On $q$-series for principal characters of standard $A_2^{(2)}$-modules}
\author{Shashank Kanade}
\address{University of Denver} 
\email{\texttt{shashank.kanade@du.edu}}
\thanks{S.K. is currently supported by Simon's Collaboration Grant for Mathematicians, \#636937.}
\author{Matthew C.\ Russell}
\address{Rutgers, The State University of New Jersey} 
\email{\texttt{russell2@math.rutgers.edu}}
\thanks{
	We thank Chris Jennings-Shaffer and Jeremy Lovejoy for their very helpful comments and suggestions.
	We are very grateful to Drew Sills 
	for his interest.}

\begin{abstract}
We present sum-sides for principal characters of all standard (i.e., integrable and highest-weight) irreducible modules for the affine Lie algebra
$A_2^{(2)}$. We use modifications of five known Bailey pairs; three of these are sufficient to obtain all the necessary principal characters. We then use the technique of Bailey lattice appropriately extended to include ``out-of-bounds'' values of one of the parameters, namely, $i$.
We demonstrate how the sum-sides break into six families depending on the level of the modules modulo 6, confirming a conjecture of McLaughlin--Sills.
\end{abstract}
\maketitle

\section{Introduction}

Let $\la{g}$ be an affine Kac-Moody Lie algebra of rank $r+1$ ($r\geq 1$) and let $L(\lambda)$ be a standard  (i.e., integrable and highest weight), irreducible module for $\la{g}$ 
of level $\ell\in\mathbb{Z}_{\geq 0}$ and highest weight $\lambda$ (see \cite{Kac-book} for all the relevant definitions).
Such $L(\lambda)$ are direct sums of their finite-dimensional weight spaces.
Thus, to $L(\lambda)$ we associate the generating function of the dimensions of the weight spaces, thereby obtaining what is known as the character of $L(\lambda)$ denoted by $\ch(L(\lambda))$. This character naturally lives in the space $e^\lambda\ZZ[[x_0,x_1,\cdots, x_r]]$.
Throughout this paper, we will omit the factor $e^\lambda$, in effect normalizing the characters.
Specializing all variables $x_0,\dots,x_r\mapsto q$ where $q$ is a formal variable is known the principal specialization of the character. 
We will denote principally specialized characters by $\chi$:
\begin{align}
\chi(\bullet) = \ch(\bullet)|_{x_0,\dots,x_r\mapsto q}.
\end{align}
By Weyl-Kac character formula and Lepowsky's numerator formula, it is straightforward to see that principally specialized characters of standard modules are infinite periodic products (see below for  examples).

In this paper, we are concerned with a certain substructure of $L(\lambda)$, namely, the space of highest weight vectors  of $L(\lambda)$ with respect to the principal Heisenberg subalgebra of $\la{g}$. This is called the vacuum space and we denote it by $\Omega(L(\lambda))$. The vacuum space also has a character, and upon principal specialization, it turns into an infinite periodic product.
We will call this the principal character of $L(\lambda)$. 
For instance, with
 $\la{g}=A_1^{(1)}$ (also known as $\widehat{\la{sl}}_2$), 
and the level $3$ standard modules $L(3\Lambda_0), L(2\Lambda_0+\Lambda_1),L(\Lambda_0+2\Lambda_1),L(3\Lambda_1)$, we have:
\begin{align}
\chi(L(3\Lambda_0)) = \chi(L(3\Lambda_1)) = \frac{1}{(q;q^2)_{\infty}}\frac{1}{(q^2,q^3;q^5)_\infty},\nonumber\\
\chi(L(2\Lambda_0+\Lambda_1)) = \chi(L(\Lambda_0+2\Lambda_1))= \frac{1}{(q;q^2)_{\infty}}\frac{1}{(q^1,q^4;q^5)_\infty}.
\end{align}
But more importantly for us,
\begin{align}
\chi(\Omega(3\Lambda_0)) = \chi(\Omega(3\Lambda_1)) =\frac{1}{(q^2,q^3;q^5)_\infty},\quad
\chi(\Omega(2\Lambda_0+\Lambda_1)) = \chi(\Omega(\Lambda_0+2\Lambda_1)) = \frac{1}{(q,q^4;q^5)_\infty}.
\end{align}
Here and throughout we use:
\begin{align}
(a_1,a_2\cdots, a_k;\,\,q)_n=\prod_{0\leq t < n} (1-a_1q^t)\cdots (1-a_kq^t).
\end{align}

The products in the last two equations are the ``product-sides'' of the classical Rogers-Ramanujan identities:
\begin{align}
\sum_{n\geq 0}\frac{q^{n^2}}{(q;q)_n} = \frac{1}{(q,q^4;q^5)_\infty}, \quad\quad
\sum_{n\geq 0}\frac{q^{n^2+n}}{(q;q)_n} = \frac{1}{(q^2,q^3;q^5)_\infty} .\label{eqn:rr}
\end{align}
The problem of finding combinatorial interpretations (especially using representation-theoretic techniques) for these sum-sides is an \emph{extremely} important one. We shall not address this problem here, instead referring the reader to \cite{Lep-survey}, \cite{LepWil-1}, \cite{LepWil-2}, \cite{MeuPri}, \cite{Sil-book}, etc. 

Sum-sides such as in \eqref{eqn:rr} are known for \emph{all} standard $\widehat{\la{sl}}_2$ modules. For odd levels, these give rise to the Gordon-Andrews identities, and at even levels, the Andrews-Bressoud identities.
In this paper, we provide sum-sides for \emph{all} standard modules of the next ``simplest'' affine Lie algebra, namely, $A_2^{(2)}$.
For some very recent developments regarding levels $4$, $5$ and $7$ we refer the reader to \cite{TakTsu-nandi} and \cite{TakTsu-A22A132}.

We use the technique of Bailey pairs and the Bailey lattice to provide the sum-sides.
Previously, sum-sides for certain modules at every level were given in \cite{Sil-capparelli}, \cite{McLSil-1824}, \cite{McLSil-comb} and \cite{Sil-a22bailey}; we are especially motivated by these papers. While it didn't specifically mention $A_2^{(2)}$, the paper \cite{And-multiRR} is relevant as well.
Especially, in \cite{McLSil-1824},  McLaughlin and Sills conjectured that integrable modules for $A_2^{(2)}$ would break up into six families (based on level modulo $6$), each explained by one Bailey pair. 
They also showed how to get principal character for (at least) one module at every level using these techniques. 
We show that the McLaughlin-Sills conjecture is true, except  we show that three Bailey pairs suffice to give the characters for the six families. 
We actually analyse a total of five Bailey pairs and for each level that is not divisible by $3$, each module is explained separately by two different Bailey pairs. 
This is exhibited in Tables \ref{table:pairs} and \ref{table:ids} below.

Certain important modifications of known Bailey pairs and an extension of the Bailey lattice machinery have proved \emph{immensely} useful for us.
The uniformity with which these produce \emph{all} requisite $A_2^{(2)}$ principal characters is remarkable and does not seem to be explored before in this context. We now explain these modifications.

Recall the definition of Bailey pair: A pair of $q$-series sequences $\alpha_n,\beta_n$ ($n\in\ZZ_{\geq 0}$) forms a Bailey pair with respect to a base $a$ if they satisfy the condition \eqref{eqn:bailey}. 
A Bailey pair gives rise to identities, one for each $k\geq 1, 0\leq i\leq k$ upon using the Bailey lattice \cite[Thm.\ 3.1]{AgaAndBre-lattice}. We shall need to specialize \cite[Thm.\ 3.1]{AgaAndBre-lattice} in various ways.
For us, the 
level $\ell$ of modules in a given family grows roughly as $6k$.

First modification we use is as follows. We start with Bailey pairs A1, A2, A6, A7 of \cite{Sla-pairs} and P1 of \cite{McLSil-1824}, we retain the corresponding $\beta_n$'s, but find \emph{new} $\alpha_n$ with respect to the base shifted as $a\mapsto aq$. This is achieved via the special case $k=1, d_1\rightarrow 0$ of \cite[Thm.\ 2.3]{Lov-lattice}.  Not only do we find that the modified pairs have much more elegant formulas (granting notation \eqref{eqn:alphatilde}) in comparison to original pairs, (see our Table \ref{table:pairs} vis-\`{a}-vis \cite{Sla-pairs}, \cite{McLSil-1824}), but that these modifications work very well with our extension of the Bailey lattice machinery to produce exactly the required characters.

Secondly, for each $k$, the bailey lattice as given in \cite{AgaAndBre-lattice} produces $k+1$ identities, however, the number of modules at corresponding levels is roughly $3k$. To account for these additional characters, we need to extend the Bailey lattice to $i>k$, we do so mainly by incorporating ``backward moves'', see 
\eqref{eqn:b1}, \eqref{eqn:b2} below.

In 2014, Nandi conjectured intriguing combinatorial sum-sides for the level $4$ principal characters \cite{Nan-thesis} and in a significant recent development \cite{TakTsu-nandi} these identities were proved. The final step in the proof of \cite{TakTsu-nandi} uses Slater's mod-14 identities, \cite{Sla-ids}. We hope that the identities we present are similarly useful for other levels.

Recall that $A_1^{(1)}$ gave rise to two families of principal characters (based on levels mod $2$) and similarly, $A_2^{(2)}$ is now shown to give rise to six families. 
It is has not escaped our notice that in order to construct the principally specialized modules of $A_1^{(1)}$, one uses an automorphism of $A_1 = \la{sl}_2$ which has order two and for $A_2^{(2)}$ one uses the twisted Coxeter automorphism of $A_2 = \la{sl}_3$ which has order six \cite{Fig-thesis}. It will be interesting to see if this phenomena persists at higher ranks.

As we remarked earlier, the problem of finding combinatorial interpretations of the sum-sides is important. We shall address it elsewhere.

\section{Slater and Bailey}

Two sequences $\alpha_n$, $\beta_n$ ($n\geq 0$) form a Bailey pair (\cite{Sla-pairs}, \cite{And-qserbook}) with respect to base $a$ if for all $n\geq 0$:
\begin{equation}
\beta_n=\sum_{t=0}^n\frac{\alpha_n}{(q)_{n-t}(aq)_{n+t}}.
\label{eqn:bailey}
\end{equation}
It will be very convenient for us to use the notation:
\begin{align}
\widetilde{\alpha}_n=\frac{1-a}{1-aq^{2n}}\alpha_n.\label{eqn:alphatilde}
\end{align} 

Several ways of getting new Bailey pairs out of a given one are important to us: base shift, forward moves, backward moves (two of each kind), 
base changes (again, two types). 
The forward moves are just special limits of the actual Bailey lemma. Base shift and base change are two different operations.

We start by describing the base shift. We will use this operation on well-known Bailey pairs to arrive at pairs more suitable for our purposes.
\begin{lem}(Base shift) \label{lem:baseshift}
Suppose $\alpha_n',\beta_n$ is a Bailey pair with respect to the base $a$. Then, $\alpha_n,\beta_n$ is a Bailey pair with respect to base $aq$ iff
\begin{align}
\widetilde{\alpha}_n=
\sum_{r=0}^n 
a^{n-r}q^{n^2-r^2}\alpha_r'.\label{eqn:baseshiftalphatilde}
\end{align}
Moreover, \eqref{eqn:baseshiftalphatilde} holds iff:
\begin{align}
\widetilde{\alpha}_0=\alpha'_0,\quad \widetilde{\alpha}_{n+1} = aq^{2n+1}\cdot \widetilde{\alpha}_{n} + \alpha'_{n+1}\,\,(n\geq 0).
\label{eqn:baseshiftalphatilde_rec}
\end{align}
\end{lem}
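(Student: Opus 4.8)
The plan is to split the lemma into its two assertions and dispatch the easy one first. The equivalence of the closed form \eqref{eqn:baseshiftalphatilde} with the recurrence \eqref{eqn:baseshiftalphatilde_rec} is purely formal: writing $S_n:=\sum_{r=0}^n a^{n-r}q^{n^2-r^2}\alpha'_r$ and isolating the top term $r=n+1$ in $S_{n+1}$, the factorization $a^{(n+1)-r}q^{(n+1)^2-r^2}=aq^{2n+1}\cdot a^{n-r}q^{n^2-r^2}$ pulls $aq^{2n+1}$ out of the remaining terms and yields $S_{n+1}=aq^{2n+1}S_n+\alpha'_{n+1}$ with $S_0=\alpha'_0$. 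Thus $S_n$ solves \eqref{eqn:baseshiftalphatilde_rec}, and since that recurrence together with its initial value determines the sequence uniquely, any solution coincides with $S_n$. Hence \eqref{eqn:baseshiftalphatilde} $\Leftrightarrow$ \eqref{eqn:baseshiftalphatilde_rec}, and it remains to relate \eqref{eqn:baseshiftalphatilde} to the base-shift property itself.

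For the main equivalence I would first reduce two implications to one. For a fixed sequence $(\beta_n)$, the defining relation \eqref{eqn:bailey} is a lower-triangular linear system in the $\alpha$'s whose diagonal entries $1/(aq)_{2n}$ are nonzero (working over the field of rational functions in $a,q$, or formally). Therefore $(\beta_n)$ determines its base-$a$ partner $(\alpha'_n)$ and its base-$aq$ partner $(\alpha_n)$ uniquely, and conversely each partner determines $(\beta_n)$. Consequently it suffices to prove a single implication: assuming $(\alpha'_n,\beta_n)$ is a Bailey pair relative to $a$ and \emph{defining} $(\alpha_n)$ through \eqref{eqn:baseshiftalphatilde}, I must verify that $(\alpha_n,\beta_n)$ is a Bailey pair relative to $aq$; the converse then follows from the uniqueness just noted. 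Here $\widetilde\alpha_n$ is the normalization \eqref{eqn:alphatilde} taken with respect to the base $aq$ of $\alpha_n$, i.e. $\widetilde\alpha_n=\frac{1-aq}{1-aq^{2n+1}}\alpha_n$ (consistently, \eqref{eqn:baseshiftalphatilde} forces $\widetilde\alpha_0=\alpha_0=\alpha'_0$).

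To run the verification, substitute $\alpha_t=\frac{1-aq^{2t+1}}{1-aq}\sum_{r=0}^t a^{t-r}q^{t^2-r^2}\alpha'_r$ into the base-$aq$ sum $\sum_{t=0}^n \frac{\alpha_t}{(q)_{n-t}(aq^2)_{n+t}}$ and interchange the order of summation over $r$ and $t$. Everything then reduces to the single kernel identity
\[
\frac{1}{1-aq}\sum_{t=r}^{n}\frac{(1-aq^{2t+1})\,a^{t-r}q^{t^2-r^2}}{(q)_{n-t}(aq^2)_{n+t}}=\frac{1}{(q)_{n-r}(aq)_{n+r}}\qquad(0\le r\le n),
\]
because once this holds the interchanged double sum collapses to $\sum_{r=0}^n \frac{\alpha'_r}{(q)_{n-r}(aq)_{n+r}}=\beta_n$, which is exactly \eqref{eqn:bailey} for the base-$a$ pair, as required.

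Proving this kernel identity is the main obstacle. Setting $s=t-r$ and $c=aq^{2r+1}$, and using $(aq^2)_{n+t}=(aq)_{n+t+1}/(1-aq)$ to absorb the prefactor, the left-hand side becomes a terminating sum whose summand carries the factor $(1-cq^{2s})/(1-c)$; this is precisely the very-well-poised shape, so the identity follows either by recognizing it as a terminating very-well-poised ${}_6\phi_5$ summation or, more hands-on, by exhibiting a telescoping made possible by that well-poised factor. I expect the only delicate point to be the bookkeeping of the quadratic $q$-powers $q^{t^2-r^2}$ against the shifted $q$-Pochhammer symbols; the extreme case $r=n$ (a single term) and small values of $n$ serve as useful sanity checks. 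As an alternative that avoids the forward computation, one could instead insert the explicit Bailey inversion of \eqref{eqn:bailey} for both $\widetilde\alpha_n$ and $\alpha'_n$ in terms of $(\beta_t)$ and match the coefficient of each $\beta_t$; this replaces the displayed kernel identity by a companion terminating sum of the same very-well-poised type, leaving the analytic heart unchanged.
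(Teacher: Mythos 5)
Your proof is correct, but it takes a genuinely different route: the paper's own proof of this lemma is essentially a citation, observing that \eqref{eqn:baseshiftalphatilde} is the special case $k=1$, $d_1\to 0$ of \cite[Thm.\ 2.3]{Lov-lattice} and declaring the equivalence of \eqref{eqn:baseshiftalphatilde} with \eqref{eqn:baseshiftalphatilde_rec} to be clear (your top-term-splitting argument is exactly the justification the paper leaves implicit). What you do instead is re-prove the relevant special case of Lovejoy's theorem from scratch: the triangularity/uniqueness observation correctly reduces the ``iff'' to a single implication, the interchange of summation is legitimate (all sums are finite), and your kernel identity is true and is indeed of very-well-poised type. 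Concretely, with $N=n-r$ and $c=aq^{2r+1}$ it is equivalent to $\sum_{s=0}^{N}(1-cq^{2s})(-c)^{s}q^{\binom{s}{2}+Ns}\,(q^{-N})_s/(cq^{N+1})_s=1-cq^{N}$, which is the limiting case of the terminating very-well-poised ${}_6\phi_5$ summation in which one numerator parameter is set to $q$ and another is sent to infinity; alternatively it telescopes via the two-term split $(1-cq^{2s})=(1-cq^{N+s})+cq^{N+s}(1-q^{s-N})$, each piece shifting a Pochhammer index by one. So the one place where your write-up is a sketch rather than a finished argument --- you name the ${}_6\phi_5$/telescoping route but do not execute it --- does go through exactly as claimed. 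The trade-off between the two approaches: the paper's citation is one line and keeps the lemma embedded in the Bailey-lattice machinery on which the rest of the paper depends anyway; your argument is longer but elementary and self-contained, makes the base-shift lemma logically independent of \cite{Lov-lattice}, and exposes precisely which classical summation theorem it rests on.
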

\begin{proof}
Equation \eqref{eqn:baseshiftalphatilde} is the special $k=1, d_1\rightarrow 0$ of \cite[Thm.\ 2.3]{Lov-lattice}. 
Equations \eqref{eqn:baseshiftalphatilde} and \eqref{eqn:baseshiftalphatilde_rec} are clearly equivalent.
\end{proof}

\begin{lem}
If $\alpha_n$, $\beta_n$ is a Bailey pair with respect to $a$, then so are the following.
Here, ``F'' stands for forward and ``B'' for backward moves.
\begin{align}
\tag{F1}
\label{eqn:f1}
\beta_n' = \sum_{j=0}^n\frac{a^jq^{j^2}}{(q)_{n-j}}\beta_j,\quad
\alpha_n'&= a^nq^{n^2}\alpha_n.
\end{align}
\begin{align}
\tag{B1}
\label{eqn:b1}
\beta_n' = (-1)^na^{-n}q^{-n^2}\sum_{j=0}^n(-1)^{j}\frac{q^{\binom{n-j}{2}}}{(q)_{n-j}}\beta_j,\quad
\alpha_n'&= a^{-n}q^{-n^2}\alpha_n.
\end{align}
\begin{align}
\tag{F2}
\label{eqn:f2}
\beta_n' = \sum_{j=0}^n\frac{(-q)_jq^{\binom{j}{2}} a^j}{(q)_{n-j}(-a)_n}\beta_j,\quad
\alpha_n'&= \frac{(-q)_nq^{\binom{n}{2}}a^n}{(-a)_n}\alpha_n.
\end{align}
\begin{align}
\tag{B2}
\label{eqn:b2}
\beta_n' = \frac{(-1)^na^{-n}q^{-\binom{n}{2}}}{(-q)_n}\sum_{j=0}^n(-1)^{j}\frac{q^{\binom{n-j}{2}}(-a)_j}{(q)_{n-j}}\beta_j,\quad
\alpha_n'&= \frac{(-a)_nq^{-\binom{n}{2}}a^{-n}}{(-q)_n}\alpha_n.
\end{align}
\end{lem}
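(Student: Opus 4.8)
The plan is to realize all four transformations as special cases of the standard two-parameter Bailey lemma. Recall that this lemma asserts: if $(\alpha_n,\beta_n)$ is a Bailey pair relative to $a$, then for any parameters $\rho_1,\rho_2$ the sequences
\begin{align*}
\alpha_n' &= \frac{(\rho_1;q)_n(\rho_2;q)_n}{(aq/\rho_1;q)_n(aq/\rho_2;q)_n}\left(\frac{aq}{\rho_1\rho_2}\right)^n\alpha_n,\\
\beta_n' &= \sum_{j=0}^n \frac{(\rho_1;q)_j(\rho_2;q)_j(aq/\rho_1\rho_2;q)_{n-j}}{(q;q)_{n-j}(aq/\rho_1;q)_n(aq/\rho_2;q)_n}\left(\frac{aq}{\rho_1\rho_2}\right)^j\beta_j
\end{align*}
again form a Bailey pair relative to $a$. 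As the excerpt notes, the forward moves are limits of this lemma; the backward moves turn out to be the \emph{complementary} limits. Concretely, I claim the four moves are obtained by the specializations
\begin{align*}
\text{\eqref{eqn:f1}: }&\rho_1,\rho_2\to\infty, &&\text{\eqref{eqn:f2}: }\rho_1=-q,\ \rho_2\to\infty,\\
\text{\eqref{eqn:b1}: }&\rho_1,\rho_2\to 0, &&\text{\eqref{eqn:b2}: }\rho_1=-a,\ \rho_2\to 0.
\end{align*}

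The engine of every computation is the elementary asymptotic $(X;q)_m=(-X)^m q^{\binom{m}{2}}\bigl(1+O(X^{-1})\bigr)$ as $X\to\infty$, together with $(X;q)_m\to 1$ as $X\to 0$. First I would handle the two forward moves. For \eqref{eqn:f1} ($\rho_1,\rho_2\to\infty$) one has $(aq/\rho_i;q)_\bullet\to 1$ while $(\rho_i;q)_m\sim(-\rho_i)^m q^{\binom{m}{2}}$, and the factor $(aq/\rho_1\rho_2)^\bullet$ supplies exactly the powers of $\rho_1\rho_2$ needed to cancel the divergence; collecting the surviving powers of $a$ and $q$ reproduces $\alpha_n'=a^nq^{n^2}\alpha_n$ and the stated $\beta_n'$. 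For \eqref{eqn:f2} one repeats this with $\rho_1=-q$ held fixed, so that $(\rho_1;q)_\bullet=(-q)_\bullet$ and $(aq/\rho_1;q)_\bullet=(-a)_\bullet$ appear verbatim, while the $\rho_2\to\infty$ part contributes the remaining $q^{\binom{n}{2}}$ and $a^n$, yielding $\alpha_n'=\tfrac{(-q)_nq^{\binom{n}{2}}a^n}{(-a)_n}\alpha_n$ and the stated $\beta_n'$.

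The backward moves are the main obstacle, because taking $\rho_1,\rho_2\to 0$ sends $aq/\rho_i\to\infty$, so now it is the denominator factors $(aq/\rho_i;q)_n$ that blow up, via $(aq/\rho_i;q)_n\sim(-aq/\rho_i)^n q^{\binom{n}{2}}$, and one must check that the resulting negative powers of $\rho_1,\rho_2$ cancel \emph{between} the explicit factor $(aq/\rho_1\rho_2)^\bullet$ and these Pochhammer asymptotics. For \eqref{eqn:b1} ($\rho_1,\rho_2\to 0$) the $\rho_i$-powers cancel exactly in both $\alpha_n'$ and in each term of $\beta_n'$; the surviving sign is $(-1)^{n-j}=(-1)^n(-1)^j$, and the leftover $q$-exponent works out to $\binom{n-j}{2}-n^2$, which splits cleanly into the prefactor $q^{-n^2}$ and the summand factor $q^{\binom{n-j}{2}}$, matching \eqref{eqn:b1}. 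For \eqref{eqn:b2} one fixes $\rho_1=-a$ (so $(\rho_1;q)_n=(-a)_n$ and $(aq/\rho_1;q)_n=(-q)_n$ appear verbatim) and lets only $\rho_2\to 0$; the divergent $\rho_2$-powers coming from $(aq/\rho_2;q)_n$ and from $(aq/\rho_1\rho_2)^\bullet$ again cancel, leaving precisely the prefactors $\tfrac{(-a)_nq^{-\binom{n}{2}}a^{-n}}{(-q)_n}$ and $\tfrac{(-1)^na^{-n}q^{-\binom{n}{2}}}{(-q)_n}$ of \eqref{eqn:b2}.

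I would close by recording the direct alternative, which sidesteps the limiting arguments entirely: substituting the Bailey-pair relation \eqref{eqn:bailey} for $\beta_j$ into each proposed $\beta_n'$ and interchanging the order of summation, the claim reduces in every case to a single terminating basic hypergeometric summation in the inner index — a form of the $q$-Chu--Vandermonde / $q$-Pfaff--Saalsch\"utz evaluation — whose validity is independent of the Bailey structure. I expect the limit route to be the more transparent of the two, with the only genuine care needed in tracking the $\rho_i\to 0$ cancellations for \eqref{eqn:b1} and \eqref{eqn:b2}.
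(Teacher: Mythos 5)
Your proposal is correct, and on the forward moves it coincides with the paper's proof: \eqref{eqn:f1} and \eqref{eqn:f2} are exactly the limits $\rho,\sigma\to\infty$ and $\rho=-q,\ \sigma\to\infty$ of \cite[Lem.\ 1.1]{AgaAndBre-lattice}. Where you genuinely depart from the paper is on the backward moves. The paper's proof of \eqref{eqn:b1} and \eqref{eqn:b2} cites the \emph{same} two specializations again; read literally those limits only reproduce the forward moves, so the paper is tacitly using that \eqref{eqn:b1}, \eqref{eqn:b2} are the \emph{inverse} transformations of \eqref{eqn:f1}, \eqref{eqn:f2} --- the connection matrices are lower triangular with nonvanishing diagonal, and the required inversion, $\sum_{l\geq 0}(-1)^l q^{\binom{l}{2}}/\bigl((q)_l\,(q)_{m-l}\bigr)=\delta_{m,0}$, is the $q$-binomial theorem, i.e.\ the same identity the paper later invokes for \eqref{eqn:b1bc1}. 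You instead derive the backward moves directly as the complementary limits $\rho,\sigma\to 0$ and $\rho=-a,\ \sigma\to 0$, and these do work out exactly as claimed: for \eqref{eqn:b1} the denominators give $(aq/\rho)_n(aq/\sigma)_n\sim (aq)^{2n}(\rho\sigma)^{-n}q^{n^2-n}$ while the numerators give $(aq/\rho\sigma)_{n-j}\,(aq/\rho\sigma)^{j}\sim(-1)^{n-j}(aq)^{n}(\rho\sigma)^{-n}q^{\binom{n-j}{2}}$, so the powers of $\rho,\sigma$ cancel termwise and the coefficient of $\beta_j$ tends to $(-1)^{n+j}a^{-n}q^{\binom{n-j}{2}-n^2}/(q)_{n-j}$, which is \eqref{eqn:b1}; the case $\rho=-a,\ \sigma\to 0$ similarly yields \eqref{eqn:b2} verbatim. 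Two small points of care: your sketch attributes the divergence only to the denominator factors $(aq/\rho_i)_n$, but the numerator factor $(aq/\rho_1\rho_2)_{n-j}$ diverges as well and is what supplies the sign $(-1)^{n-j}$ and the exponent $\binom{n-j}{2}$, so it must enter the bookkeeping; and the $\rho_i\to 0$ limits are justified for the same reason as the $\rho_i\to\infty$ ones (all quantities are rational in $\rho_1,\rho_2$). As for what each route buys: yours is uniform and self-contained, obtaining all four moves as honest specializations of one lemma with no unstated inversion step, whereas the paper's (implicit) route makes transparent that the B-moves invert the F-moves, which is the structural fact behind the simplifications recorded in Remarks \ref{rem:b1bc1} and \ref{rem:f2b1}. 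Your closing alternative (substituting \eqref{eqn:bailey} and evaluating the inner sum by a $q$-Chu--Vandermonde type summation) is also viable but unnecessary once the limit argument is in place.
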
	
\begin{proof}
For \eqref{eqn:f1}, take $\rho,\sigma\rightarrow\infty$ in \cite[Lem.\ 1.1]{AgaAndBre-lattice}.
For \eqref{eqn:b1}, take $\rho,\sigma\rightarrow\infty$ in \cite[Lem.\ 1.1]{AgaAndBre-lattice}.
For \eqref{eqn:f2}, take $\rho\rightarrow-q,\sigma\rightarrow\infty$ in \cite[Lem.\ 1.1]{AgaAndBre-lattice}.
For \eqref{eqn:b2}, take $\rho\rightarrow-q,\sigma\rightarrow\infty$ in \cite[Lem.\ 1.1]{AgaAndBre-lattice}.
\end{proof}

\begin{lem}
If $\alpha_n$, $\beta_n$ is a Bailey pair with respect to $a$, then the following are Bailey pairs with respect to $aq^{-1}$.
These are called the base change moves.
\begin{align}
\tag{BC1}
\label{eqn:bc1}
\beta_n' = \sum_{j=0}^n\frac{a^jq^{j^2-j}}{(q)_{n-j}}\beta_j,\quad
\alpha_n'&= a^nq^{n^2-n}\left\lbrace
\widetilde{\alpha}_n-
aq^{2n-2}\widetilde{\alpha}_{n-1}
\right\rbrace.
\end{align}
\begin{align}
\tag{BC2}
\label{eqn:bc2}
\beta_n' = \sum_{j=0}^n&\frac{(-q)_j}{(-a/q)_n} \frac{a^jq^{\binom{j}{2} - j}}{(q)_{n-j}}\beta_j,\quad
\alpha_n'= \frac{(-q)_na^nq^{\binom{n}{2}-n}}{(-a/q)_n}
\left\lbrace
\widetilde{\alpha}_n-
aq^{2n-2}\widetilde{\alpha}_{n-1}
\right\rbrace.\nonumber
\end{align}
In both cases, we take:
\begin{align}
\alpha_0'=\alpha_0.
\end{align}
\end{lem}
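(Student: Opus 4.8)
The plan is to obtain each base change move as a \emph{composition} of two operations already at our disposal: first the base shift of Lemma~\ref{lem:baseshift}, used to descend from base $a$ to base $aq^{-1}$ while leaving $\beta_n$ untouched, and then the corresponding forward move---\eqref{eqn:f1} for \eqref{eqn:bc1} and \eqref{eqn:f2} for \eqref{eqn:bc2}---carried out \emph{at the new base} $aq^{-1}$. The point is that once the base is $b:=aq^{-1}$, the prefactor $b^jq^{j^2}=a^jq^{j^2-j}$ appearing in \eqref{eqn:f1} is exactly the one in \eqref{eqn:bc1}, and likewise $(-b)_n=(-a/q)_n$ together with $q^{\binom{j}{2}}b^j=a^jq^{\binom{j}{2}-j}$ reproduce the weights in \eqref{eqn:f2} and \eqref{eqn:bc2}. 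Thus the $\beta_n'$ formulas will be immediate, and essentially all of the content lies in identifying the $\alpha$-sequence produced by the base shift.

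For the base shift step I would apply Lemma~\ref{lem:baseshift} with its base parameter set equal to $aq^{-1}$, so that the ``$aq$'' of that lemma is our $a$ and the given pair $(\alpha_n,\beta_n)$ plays the role of the base-$aq$ pair there. Writing $\gamma_n$ for the resulting base-$aq^{-1}$ alpha-sequence and keeping $\widetilde{\alpha}$ as in \eqref{eqn:alphatilde} relative to base $a$, the recursion \eqref{eqn:baseshiftalphatilde_rec} specializes to $\widetilde{\alpha}_{n+1}=aq^{2n}\,\widetilde{\alpha}_n+\gamma_{n+1}$ with $\gamma_0=\widetilde{\alpha}_0$. Solving for $\gamma$ gives
\[
\gamma_0=\alpha_0,\qquad \gamma_n=\widetilde{\alpha}_n-aq^{2n-2}\,\widetilde{\alpha}_{n-1}\quad(n\geq1),
\]
so $(\gamma_n,\beta_n)$ is a Bailey pair relative to $aq^{-1}$ by Lemma~\ref{lem:baseshift}. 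This $\gamma_n$ is precisely the bracketed expression $\{\widetilde{\alpha}_n-aq^{2n-2}\widetilde{\alpha}_{n-1}\}$ occurring in both \eqref{eqn:bc1} and \eqref{eqn:bc2}.

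Finally I would feed $(\gamma_n,\beta_n)$---a Bailey pair relative to $b=aq^{-1}$---into the forward moves \eqref{eqn:f1} and \eqref{eqn:f2}, which already produce Bailey pairs relative to the same base. Substituting $b=aq^{-1}$ and simplifying the powers of $q$ and the $q$-Pochhammer symbols turns the \eqref{eqn:f1} output into $\beta_n'=\sum_j \frac{a^jq^{j^2-j}}{(q)_{n-j}}\beta_j$ and $\alpha_n'=b^nq^{n^2}\gamma_n=a^nq^{n^2-n}\gamma_n$, which is exactly \eqref{eqn:bc1}; running the same substitution through \eqref{eqn:f2} yields \eqref{eqn:bc2}. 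The boundary value $\alpha_0'=\gamma_0=\alpha_0$ then falls out automatically, matching the stated normalization.

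I do not anticipate a genuine obstacle here, since everything reduces to stringing together two lemmas established above. The only thing demanding care is bookkeeping: keeping straight that the tilde of \eqref{eqn:alphatilde} is taken relative to base $a$ (not $aq^{-1}$) throughout the base shift, correctly tracking the substitution $b=aq^{-1}$ in every power of $q$ and every Pochhammer symbol coming from \eqref{eqn:f1}--\eqref{eqn:f2}, and checking the $n=0$ terms so that the single normalization $\alpha_0'=\alpha_0$ is consistent with the general formula.
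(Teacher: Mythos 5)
Your proof is correct in substance, but it takes a genuinely different route from the paper's. The paper disposes of both moves in one line each, by citing the base-changing lemma of Agarwal--Andrews--Bressoud \cite[Lem.~1.2]{AgaAndBre-lattice} under the specializations $\rho,\sigma\to\infty$ for \eqref{eqn:bc1} and $\rho\to-q$, $\sigma\to\infty$ for \eqref{eqn:bc2} --- exactly parallel to how it derives \eqref{eqn:f1}--\eqref{eqn:b2} from Lemma~1.1 of that same paper. You instead factor each base change move inside the paper's own toolkit: first an ``inverse'' base shift via Lemma~\ref{lem:baseshift}, descending from base $a$ to $aq^{-1}$ with $\beta_n$ unchanged and producing $\gamma_0=\alpha_0$, $\gamma_n=\widetilde{\alpha}_n-aq^{2n-2}\widetilde{\alpha}_{n-1}$ for $n\geq 1$; then \eqref{eqn:f1} (resp.\ \eqref{eqn:f2}) executed at the new base $b=aq^{-1}$. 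Your bookkeeping is right: $b^jq^{j^2}=a^jq^{j^2-j}$, $(-b)_n=(-a/q)_n$ and $q^{\binom{j}{2}}b^j=a^jq^{\binom{j}{2}-j}$ recover precisely the weights in \eqref{eqn:bc1} and \eqref{eqn:bc2}, and the normalization $\alpha_0'=\alpha_0$ comes out automatically. What your decomposition buys: it is self-contained (no external citation beyond results already proved in the paper), it explains structurally why the same bracket $\left\lbrace\widetilde{\alpha}_n-aq^{2n-2}\widetilde{\alpha}_{n-1}\right\rbrace$ appears in both moves (it is the common first step, the differing prefactors being contributed by the two forward moves), and it is in the same spirit as the compositions of moves used throughout Section~\ref{sec:gettingids}. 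What the paper's citation buys is brevity and uniformity with the proofs of the other move lemmas.

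There is one formal lacuna you should close. Lemma~\ref{lem:baseshift} is stated as: \emph{assuming} $(\alpha_n',\beta_n)$ is a Bailey pair relative to $a$, the pair $(\alpha_n,\beta_n)$ is a Bailey pair relative to $aq$ if and only if \eqref{eqn:baseshiftalphatilde} holds. You run it in reverse: knowing that your $(\alpha_n,\beta_n)$ is a Bailey pair at base $a$ (playing the role of the lemma's base-$aq$ pair), you define $\gamma_n$ by solving \eqref{eqn:baseshiftalphatilde_rec} and conclude that $(\gamma_n,\beta_n)$ is a Bailey pair relative to $aq^{-1}$ ``by Lemma~\ref{lem:baseshift}.'' That deduces the lemma's hypothesis from its conclusion: from $P\Rightarrow(Q\Leftrightarrow R)$ together with $Q$ and $R$ one cannot infer $P$. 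The repair is short. Since \eqref{eqn:bailey} at base $aq^{-1}$ is a triangular system in which $\beta_n$ involves $\alpha_0,\dots,\alpha_n$ with nonzero coefficient $1/(a)_{2n}$ on $\alpha_n$, there is a unique sequence $\alpha_n''$ making $(\alpha_n'',\beta_n)$ a Bailey pair relative to $aq^{-1}$; applying Lemma~\ref{lem:baseshift} to \emph{that} pair shows that $\alpha_n''$ satisfies the same recursion \eqref{eqn:baseshiftalphatilde_rec} that defines $\gamma_n$, whence $\alpha_n''=\gamma_n$. (Equivalently: define $\beta_n^*$ from $\gamma_n$ by \eqref{eqn:bailey} at base $aq^{-1}$, and use the lemma in its stated direction to conclude $\beta_n^*=\beta_n$.) With that patch, your argument is complete and correct.
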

\begin{proof}
For \eqref{eqn:bc1}, take $\rho,\sigma\rightarrow\infty$ in \cite[Lem.\ 1.2]{AgaAndBre-lattice}.
For \eqref{eqn:bc2}, take $\rho\rightarrow-q,\sigma\rightarrow\infty$ in \cite[Lem.\ 1.2]{AgaAndBre-lattice}.
\end{proof}

\section{Locating identities}\label{sec:gettingids}

\subsection{General Strategy}
Starting with a Bailey pair $\alpha_n, \beta_n$ with respect to certain base $a$,
and given $k,i$ with $k\geq 1$, $i\geq 0$, we shall use various sequences of moves (depending on $k,i$) to arrive
at our ultimate Bailey pair $\alpha_n^\fin, \beta_n^\fin$, 
either with base $a$ or $aq^{-1}$ (depending on whether we use base change).
We then write down the equation \eqref{eqn:bailey} that asserts that this ultimate Bailey pair is indeed a Bailey pair, and let $n\rightarrow\infty$ in this equation.
This is essentially our intended identity, up to some factors of $(q)_{\infty}$, $1-q$, $1-q^2$, etc.
The side involving $\beta^\fin$ is our sum-side.
For suitable choices of $\alpha$ in the initial Bailey pair, the final side involving $\alpha^\fin$s turns into a product upon invoking the Quintuple Product Identity (QTPI):
\begin{align}
Q&(s,t)=\prod_{n\geq 1}(1-s^n)(1-s^nt)(1-s^{n-1}t^{-1})(1-s^{2n-1}t^2)(1-s^{2n-1}t^{-2})\label{eqn:QTPI}\\
&=\sum_{n\in\ZZ}s^{(3n^2+n)/2}(t^{3n}-t^{-3n-1}) \label{eqn:QTPI-I}\\
&=\sum_{n\geq 0}s^{(3n^2+n)/2}t^{3n} 
+ \sum_{n\geq 1}s^{(3n^2-n)/2}t^{-3n}
- \sum_{n\geq 0}s^{(3n^2+n)/2}t^{-3n-1} 
- \sum_{n\geq 1}s^{(3n^2-n)/2}t^{3n-1} 
\label{eqn:QTPI-II}\\
&=\sum_{n\geq 0}s^{(3n^2-n)/2}t^{-3n} 
+ \sum_{n\geq 0}s^{(3n^2+7n+4)/2}t^{3n+3}
- \sum_{n\geq 0}s^{(3n^2+n)/2}t^{-3n-1} 
- \sum_{n\geq 0}s^{(3n^2+5n+2)/2}t^{3n+2}. 
\label{eqn:QTPI-III}
\end{align}

We now describe the sequences of moves that we will use. In each case, we provide the final form of the identity.

\subsection{No forward moves of type two}\label{sec:lim1}
\begin{enumerate}
\item If $k-i\geq 0$, use \eqref{eqn:f1} $k-i$ times. 
If $k-i< 0$, run  \eqref{eqn:b1} $|k-i|$ times.
\item If $i\geq 1$, use \eqref{eqn:bc1}.
\item If $i\geq 1$, use \eqref{eqn:f1} $i-1$ times, noting that the base is $aq^{-1}$.
\end{enumerate}		
The formula for $\alpha_n^\fin$ depends on whether $i=0$ or not. 
If $i=0$, we have:
\begin{align}
\alpha_n^\fin=a^{kn}q^{kn^2}\alpha_n.
\end{align}
If $i\geq 1$, it is not hard to see that we have:
\begin{align}
\alpha_n^\fin=  
a^{in}q^{in^2-in}
\left\lbrace a^{(k-i)n}q^{(k-i)n^2}\widetilde{\alpha}_n-a^{(k-i)(n-1)}q^{(k-i)(n-1)^2}aq^{2n-2}\widetilde{\alpha}_{n-1}\right\rbrace.
\end{align}
The formula for $\beta_n^\fin$ depends on whether $i\leq k$ or not.
If $i\leq k$, we have:
\begin{align}
\beta_n^\fin=
\sum_{n\geq j_1\geq \cdots j_k\geq 0}
\frac{a^{j_1+\cdots +j_k} q^{j_1^2+\cdots +j_k^2-j_1-\cdots -j_i}}{(q)_{n-j_1}(q)_{j_1-j_2}\cdots (q)_{j_{k-1}-j_k}}\beta_{j_k}
\end{align}
If $i>k$, we have a total of $i+|k-i|=2i-k$ moves, and after some simplification, we have:
\begin{align}
\beta_n^\fin=
\sum_{n\geq j_1\geq \cdots j_{2i-k}\geq 0}
&(-1)^{j_{i}+j_{2i-k}}
\frac{a^{j_1+\cdots +j_{i-1} - (j_{i+1}+\cdots + j_{2i-k-1})}}{(q)_{n-j_1}(q)_{j_1-j_2}\cdots (q)_{j_{2i-k-1}-j_{2i-k}}}\beta_{j_{2i-k}}\times\nonumber\\
&\times  q^{j_1^2+\cdots +j_{i-1}^2 - (j_{i+1}^2+\cdots+j_{2i-k-1}^2) - (j_1+\cdots +j_i) + 
	\left(\binom{j_{i}-j_{i+1}}{2} + \cdots  + \binom{j_{2i -k-1}-j_{2i -k}}{2}\right)
}.
\end{align}
For $i=0$, asserting 
that $\alpha^\fin_n,\beta^\fin_n$ is indeed a Bailey pair with base $a$ and letting $n\rightarrow\infty$, we see:
\begin{align}
(q)_{\infty}\beta_\infty^\fin=
\frac{1}{(a)_{\infty}}\sum_{t\geq 0}a^{kt}q^{kt^2}(1-aq^{2t})\widetilde{\alpha}_t.
\label{eqn:lim1prodi=0}
\end{align}
For $i\geq 0$, following the same procedure with base $aq^{-1}$, we see:
\begin{align}
(q)_{\infty}\beta_\infty^\fin&=
\frac{1}{(a)_\infty}
\sum_{t\geq 0}
a^{it}q^{it^2-it}
\left\lbrace a^{(k-i)t}q^{(k-i)t^2}\widetilde{\alpha}_t-a^{(k-i)(t-1)}q^{(k-i)(t-1)^2}aq^{2t-2}\widetilde{\alpha}_{t-1}\right\rbrace\nonumber\\
&=\frac{1}{(a)_{\infty}}\sum_{t\geq 0}
a^{kt}q^{kt^2-it}
\left[1-a^{i+1}q^{2it+2t}\right]\widetilde{\alpha}_{t}.
\label{eqn:lim1prod}
\end{align}
Since setting $i=0$ in the right-hand side of \eqref{eqn:lim1prod} gets us 
the right-hand side of \eqref{eqn:lim1prodi=0}, we will simply use the right-hand side of \eqref{eqn:lim1prod} for all values of $i$.
	
\begin{rem}\label{rem:b1bc1}
	Whenever the step \eqref{eqn:b1} is followed by \eqref{eqn:bc1}, the formula for  $\beta$ simplifies. See  \eqref{eqn:b1bc1} below.
\end{rem}	
	
\subsection{Second forward move at the end}\label{sec:lim2}
Here, there are several cases.

\noindent\underline{If $i=0$}:
\begin{enumerate}
	\item Use the \eqref{eqn:f1} move $k-1$ times.
	\item Use the \eqref{eqn:f2} move once.
\end{enumerate}		
The final pair $\alpha^\fin_n, \beta^\fin_n$ can be seen to be:
\begin{align}
\alpha^\fin_n = \frac{(-q)_n}{(-a)_n}a^{kn}q^{(k-1)n^2 + (n^2-n)/2}\alpha_n,
\end{align}
\begin{align}
\beta_n^\fin = 
\sum_{n\geq j_1\geq \cdots j_k\geq 0}\frac{1}{(q)_{n-j_1}}
 \frac{(-q)_{j_1}a^{j_1+\cdots +j_k}q^{ \binom{j_1}{2} + j_{2}^2+\cdots +j_k^2}}{(-a)_{n}(q)_{j_1-j_{2}} \cdots (q)_{j_{k-1} - j_k} }\beta_{j_k}.
\end{align}
Asserting that $\alpha^\fin_n,\beta^\fin_n$ is indeed a Bailey pair with base $a$ and letting $n\rightarrow\infty$, we see:
\begin{align}
\beta^\fin_\infty=
\sum_{t\geq 0}
\frac{(-q)_t}{(-a)_t}a^{kt}q^{(k-1)t^2+(t^2-t)/2}\alpha_t \frac{1}{(q)_{\infty}(aq)_{\infty}}
\end{align}
Multiplying both sides by $(q)_{\infty}$ and simplifying, we see:
\begin{align}
(q)_{\infty}\beta^\fin_\infty=\frac{1}{(a)_{\infty}}\sum_{t\geq 0}\frac{(-q)_t}{(-a)_t}a^{kt}q^{(k-1)t^2+(t^2-t)/2}(1-aq^{2t})\widetilde{\alpha}_t. 
\label{eqn:F2endi=0}
\end{align}

\noindent\underline{If $i=1$}:
\begin{enumerate}
	\item Use \eqref{eqn:f1} move $k-1$ times.
	\item Use \eqref{eqn:bc2}.
\end{enumerate}
It can be seen that:
\begin{align}
\alpha_n^\fin = \frac{(-q)_na^nq^{(n^2-n)/2-n}}{(-a/q)_n}
\left\lbrace
{a^{(k-1)n}q^{(k-1)n^2}}\widetilde{\alpha}_n-
{a^{(k-1)(n-1)}q^{(k-1)(n-1)^2}aq^{2n-2}}\widetilde{\alpha}_{n-1}
\right\rbrace,
\end{align}
\begin{align}
\beta_n^\fin = 
\sum_{n\geq j_1\geq \cdots \geq j_k\geq 0}\frac{1}{(-a/q)_{n}(q)_{n-j_1}}
\frac{(-q)_{j_1}a^{j_1+\cdots +j_k}q^{\binom{j_1}{2} - j_1 + j_{2}^2+\cdots +j_k^2}}{(q)_{j_1-j_{2}} \cdots (q)_{j_{k-1} - j_k} }\beta_{j_k}.
\end{align}
Asserting that $\alpha^\fin_n,\beta^\fin_n$ is indeed a Bailey pair with base $aq^{-1}$, letting $n\rightarrow\infty$, multiplying by $(q)_{\infty}$, we see:
\begin{align}
(q)_{\infty}\beta^\fin_\infty
&=
\frac{\alpha_0}{(a)_{\infty}}+
	\sum_{t\geq 1}\frac{(-q)_t a^tq^{(t^2-t)/2-t}}{(-a/q)_t(a)_{\infty}}
\left\lbrace
{a^{(k-1)t}q^{(k-1)t^2}}\widetilde{\alpha}_t-
{a^{(k-1)(t-1)}q^{(k-1)(t-1)^2}aq^{2t-2}}\widetilde{\alpha}_{t-1}
\right\rbrace\nonumber\\
&=\frac{1}{(a)_{\infty}}\sum_{t\geq 0} a^{kt} q^{(k-1)t^2 +(t^2-t)/2-t }\frac{(-q)_t}{(-a/q)_t}
\left[1-\frac{1+q^{t+1}}{1+aq^{t-1}}a^2q^{3t+1}\right]\widetilde{\alpha}_t.
\label{eqn:F2endi=1}
\end{align}

\noindent\underline{If $i>1$}:
\begin{enumerate}
	\item If $k-i\geq 0$, use \eqref{eqn:f1} move $k-i$ times, else use \eqref{eqn:b1} $|k-i|$ times
	\item Use \eqref{eqn:bc1}
	\item Use \eqref{eqn:f1} $i-2$ times (note that the base is now $aq^{-1}$)
	\item Use the \eqref{eqn:f2} once (note that the base is $aq^{-1}$).
\end{enumerate}

After a bit of simplification, we see:
\begin{align}
\alpha_n^\fin 
=\frac{(-q)_n a^{in}q^{in^2-in-\frac{n^2+n}{2}}}{(-a/q)_n}
\left\lbrace  
a^{(k-i)n}q^{(k-i)n^2}\widetilde{\alpha}_n -
a^{(k-i)(n-1)}q^{(k-i)(n-1)^2}aq^{2n-2}\widetilde{\alpha}_{n-1}
\right\rbrace.
\end{align}
The formula for $\beta_n^\fin$ depends on whether $i\leq k$ or not.  If $i\leq k$,
\begin{align}
\beta_n^\fin
=
\sum_{n\geq j_1\geq \cdots \geq j_k\geq 0}\frac{1}{(-a/q)_{n}(q)_{n-j_1}}
\frac{(-q)_{j_1}a^{j_1+\cdots +j_k}q^{\binom{j_1}{2} + j_{2}^2+\cdots +j_k^2 - (j_1+\cdots+j_i)}}{(-a)_{j_1}(q)_{j_1-j_{2}} \cdots (q)_{j_{k-1} - j_k} }\beta_{j_k}.
\end{align}
If $i>k$, then there are a total of $i + |k-i| = 2i-k$ moves. After simplification, we have:
\begin{align}
\beta_n^\fin
=
\sum_{n\geq j_1\geq \cdots j_{2i-k}\geq 0}
&
(-1)^{j_{i}+j_{2i-k}}
\frac{ (-q)_{j_1} \,\,
	a^{j_1+\cdots +j_{i-1} - (j_{i+1}+\cdots + j_{2i-k-1})}}{(-a/q)_{n}(q)_{n-j_1}(q)_{j_1-j_2}\cdots (q)_{j_{2i-k-1}-j_{2i-k}}}\beta_{j_{2i-k}}\times
\nonumber\\
&\times  q^{\binom{j_1}{2}+j_2^2+\cdots +j_{i-1}^2 - (j_{i+1}^2+\cdots+j_{2i-k-1}^2) - (j_1+\cdots +j_i) + 
	\left(\binom{j_{i}-j_{i+1}}{2} + \cdots  + \binom{j_{2i -k-1}-j_{2i -k}}{2}\right)
}.
\end{align}

Asserting that $\alpha^\fin_n,\beta^\fin_n$ is indeed a Bailey pair with base $aq^{-1}$, letting $n\rightarrow\infty$, multiplying by $(q)_{\infty}$, we see:
\begin{align}
(q)_{\infty}\beta^\fin_\infty
&=
\frac{\alpha_0}{(a)_{\infty}}+
\sum_{t\geq 1}\frac{(-q)_t a^{it}q^{it^2-it-\frac{t^2+t}{2}}}{(-a/q)_t(a)_{\infty}}
\left\lbrace
{a^{(k-i)t}q^{(k-i)t^2}}\widetilde{\alpha}_t-
{a^{(k-i)(t-1)}q^{(k-i)(t-1)^2}aq^{2t-2}}\widetilde{\alpha}_{t-1}
\right\rbrace\nonumber\\
&=\frac{1}{(a)_{\infty}}\sum_{t\geq 0} a^{kt}q^{kt^2 - it - \frac{t^2+t}{2}}\frac{(-q)_t}{(-a/q)_t}\left[1-\frac{1+q^{t+1}}{1+aq^{t-1}}a^{i+1}q^{t-1+2it} \right]\widetilde{\alpha}_t.\label{eqn:F2endi>0}
\end{align}
\begin{rem}
For $a=q^2$ (which is the case when we will use this section), if we let $i=1$ in the RHS of \eqref{eqn:F2endi>0}, we get exactly the RHS of 
\eqref{eqn:F2endi=1}. If we take $i=0$ instead, we get the RHS of \eqref{eqn:F2endi=0} times $1/(1+q)$. 
Keeping these in mind, we shall use the RHS of \eqref{eqn:F2endi>0} for all values of $i$.
\end{rem}

\begin{rem}
	It is possible to use the moves given for $i>1$ for $i=0$ and $i=1$. One has to replace $i-2$ \eqref{eqn:f1} moves of step 3
	with $|i-2|$ \eqref{eqn:b1} moves. However, this makes the sum-sides have more summations.
\end{rem}

\subsection{Second forward move at the beginning}\label{sec:lim3}

\begin{enumerate}
	\item Use \eqref{eqn:f2} once.
	\item If $k-i-1\geq 0$ then use \eqref{eqn:f1} $k-i-1$ times, else use the \eqref{eqn:b1} $|k-i-1|$ times.
	\item If $i\geq 1$, use \eqref{eqn:bc1}.
	\item Noting that the base is $aq^{-1}$, if $i \geq 1$, use \eqref{eqn:f1} $i-1$ times.
\end{enumerate}		

We see that:
\begin{align}
\alpha_n^\fin=a^{in}q^{i(n^2-n)}
\left\lbrace
\frac{(-a)_na^{(k-i)n}q^{\binom{n}{2}+(k-i-1)n^2}}{(-q)_n}\widetilde{\alpha}_t-
\frac{(-a)_{n-1}a^{(k-i)(n-1)}q^{\binom{n-1}{2}+(k-i-1)(n-1)^2}aq^{2n-2}}{(-q)_{n-1}}\widetilde{\alpha}_{t-1}
\right\rbrace.
\end{align}
The formula for $\beta_n^\fin$ depends on the sign of $k-i-1$. If $k-i-1\geq 0$, we get:
\begin{align}
\beta_n^\fin=\sum_{n\geq j_1\geq \cdots \geq j_k\geq 0}
\frac{a^{j_1+\cdots + j_k} q^{j_1^2+\cdots + j_{k-1}^2 - (j_1+\cdots+ j_i) + \binom{j_k}{2}} (-q)_{j_k} }
{(q)_{n-j_1}(q)_{j_1-j_2}\cdots (q)_{j_{k-1}-j_k} (-a)_{j_{k-1}} }\beta_{j_k},
\label{eqn:lim3beta1}
\end{align}
where $(-a)_{j_{k-1}}$ is taken to be $(-a)_n$ if $k=1$.
If $k-i-1<0$, we have a total of $1-(k-i-1)+1+i-1=2i-k+2$ moves. We get:
\begin{align}
\beta_n^\fin=
&\sum_{n\geq j_1\geq \cdots \geq j_{2i-k+2}\geq 0}
(-1)^{j_i + j_{2i-k+1}}
\frac{(-q)_{j_{2i-k+2}}\,\, a^{j_1+\cdots + j_{i-1} -(j_{i+1}+\cdots+j_{2i-k}) + j_{2i-k+2} } }
{(q)_{n-j_1}(q)_{j_1-j_2}\cdots (q)_{j_{2i-k+1}-j_{2i-k+2}} (-a)_{j_{2i-k+1}} }\beta_{j_{2i-k+2}}\times \nonumber\\
&\times q^{j_1^2+\cdots+j_{i-1}^2-(j_{i+1}^2+\cdots+j_{2i-k}^2) - (j_1+\cdots +j_i)
+\left(\binom{j_i-j_{i+1}}{2}+\cdots+\binom{j_{2i-k} - j_{2i-k+1}}{2} \right) + \binom{j_{2i-k+2}}{2}}.
\label{eqn:lim3beta2}
\end{align}
Asserting that $\alpha^\fin_n,\beta^\fin_n$ is indeed a Bailey pair with base $aq^{-1}$, letting $n\rightarrow\infty$, multiplying by $(q)_{\infty}$, we see:
\begin{align}
&(q)_{\infty}\beta^\fin_\infty
=\frac{\alpha_0}{(a)_{\infty}}\nonumber\\
&+\sum_{t\geq 0}
\frac{a^{it}q^{i(t^2-t)}}{(a)_{\infty}}
\left\lbrace
\frac{(-a)_t}{(-q)_t}a^{(k-i)t}q^{\binom{t}{2}+(k-i-1)t^2}\widetilde{\alpha}_t-
\frac{(-a)_{t-1}}{(-q)_{t-1}}a^{(k-i)(t-1)}q^{\binom{t-1}{2}+(k-i-1)(t-1)^2}aq^{2t-2}\widetilde{\alpha}_{t-1}
\right\rbrace\nonumber\\
&=\frac{1}{(a)_\infty}
\sum_{t\geq 0}
a^{kt}q^{kt^2 -it-\frac{t^2+t}{2}}\frac{(-q)_t}{(-a)_t}\left[1-a^{i+1}q^{2t(i+1)}\right]\widetilde{\alpha}_t.
\label{eqn:F2begprod}
\end{align}

\begin{rem}\label{rem:f2b1}
	Whenever the step \eqref{eqn:f2} is followed by \eqref{eqn:b1}, the formula for $\beta$ simplifies. See  \eqref{eqn:f2b1} below.
\end{rem}

\section{A collection of Bailey pairs}
Recalling \eqref{eqn:alphatilde}, we have the following Bailey pairs.
The relation of these Bailey pairs with the moduli will be explained below.
{\renewcommand{\arraystretch}{2.5}
	\begin{longtable}{|c||c||c|c c c||c||c|}
		\caption{Bailey Pairs}
		\label{table:pairs}
		\endfirsthead
		\endhead
		\hline
		\# & $a$ & $\beta_n$ & $\widetilde{\alpha}_m=\widetilde{\alpha}_{3n-1}$ & $\widetilde{\alpha}_m=\widetilde{\alpha}_{3n}$ & 
		$\widetilde{\alpha}_m=\widetilde{\alpha}_{3n+1}$ & {c.f.}
		& moduli \\
		\hline\hline
		1 & $q$ & $\dfrac{1}{(q)_{2n}}$ & 
		$-q^{\frac{1}{3}(2m^2-m)}$
		&$q^{\frac{1}{3}(2m^2-m)}$
		&$0$ &  A1, \cite{Sla-pairs}
		& $12k+8$, $12k+2$\\
		\hline
		 2 & $q^2$ & $\dfrac{1}{(q^2;q)_{2n}}$ & 
		0
		&$q^{\frac{1}{3}(2m^2+m)}$
		&$-q^{\frac{1}{3}(2m^2+m)}$
		&  A2, \cite{Sla-pairs}
		& $12k+8$, $12k+2$\\
		\hline
		3 & $q$ & $\dfrac{q^{n^2-n}}{(q)_{2n}}$
		&$-q^{\frac{1}{3}(m^2-2m)}$
		&$q^{\frac{1}{3}(m^2-2m)}$
		&$0$ &  A7, \cite{Sla-pairs}
		& $12k+4$, $12k-2$\\
		\hline
		4 & $q^2$ & $\dfrac{q^{n^2}}{(q^2;q)_{2n}}$ & 
		$0$
		&$q^{\frac{1}{3}(m^2-m)}$
		&$-q^{\frac{1}{3}(m^2-m)}$
		&  A6, \cite{Sla-pairs}
		& $12k+4$, $12k-2$\\
		\hline
		5 & $q$ & $\dfrac{(-1;q^3)_n}{(q)_{2n}(-1;q)_n}$ & 
		$-q^{\frac{1}{2}(m^2-m)}$
		&$q^{\frac{1}{2}(m^2-m)}$
		&$0$
		&  P1, \cite{McLSil-1824}
		& $12k+6$, $12k$\\
		\hline
	\end{longtable}
	\renewcommand{\arraystretch}{1}
}

To obtain each of these Bailey pairs, we first start with the pair $\alpha_n',\beta_n$ given in the penultimate column and then shift the base as in Lemma \ref{lem:baseshift}.
As an example, consider the pair A1 from \cite{Sla-pairs} (base $a=1$):
\begin{align}
\alpha_m'=
\begin{cases}
1 & m=0\\
-q^{6n^2-5n+1} & m=3n-1\\
q^{6n^2-n} +  q^{6n^2+n} & m=3n, n\neq 0\\
-q^{6n^2+5n+1} & m=3n+1.
\end{cases},
\quad\quad\beta_n=\frac{1}{(q)_{2n}}.
\end{align}
Now we prove by induction that our intended Bailey pair given in the table above satisfies the recurrence \eqref{eqn:baseshiftalphatilde_rec}.
Note that the original base $a$ is $1$. We have $\widetilde{\alpha}_0=\alpha'_0=1$.
Then, we have:
\begin{align}
&q^{2(3n-2)+1}\widetilde{\alpha}_{3n-2}+\alpha'_{3n-1} = 
0-q^{6n^2-5n+1}  = -q^{6n^2-5n+1} = -q^{\frac{1}{3}(2m^2-m)}, \,\,(m=3n-1)\label{eqn:A13n-1}\\
&q^{2(3n-1)+1}\widetilde{\alpha}_{3n-1}+\alpha'_{3n} = 
q^{6n-1}(-q^{6n^2-5n+1})+q^{6n^2-n}+q^{6n^2+n}  = q^{6n^2-n}=q^{\frac{1}{3}(2m^2-m)}, \,\,(m=3n)\label{eqn:A13n}\\
&q^{2(3n)+1}\widetilde{\alpha}_{3n}+\alpha'_{3n+1} = 
q^{6n+1}q^{6n^2-n}-q^{6n^2+5n+1}  = 0.\label{eqn:A13n+1}
\end{align}
As required, right-hand sides of  \eqref{eqn:A13n-1}, \eqref{eqn:A13n}, \eqref{eqn:A13n+1} are exactly the values of 
$\widetilde{\alpha}_{3n-1}$, $\widetilde{\alpha}_{3n}$, $\widetilde{\alpha}_{3n+1}$, respectively. 
The other pairs can be handled similarly.

\section{The principal characters of $A_2^{(2)}$ modules}
For this section, the main references are \cite{LepMil-a22, Sil-book, Bos-char, Kac-book}.
Pick a level $\ell\in\ZZ_{>0}$. There are $1+\left[\frac{\ell}{2}\right]$ many distinct irreducible standard (i.e., integrable highest-weight) $A_2^{(2)}$ modules up to isomorphisms at level $\ell$,
which we enumerate as $L(s_0,s_1)$ where $s_0,s_1\in\ZZ_{\geq 0}, s_0+2s_1=\ell$.
We decompose these modules with respect to the principal Heisenberg subalgebra of $A_2^{(2)}$, and consider the space of highest weight vectors. We call these spaces vacuum spaces and denote them by $\Omega(s_0,s_1)$. We then consider the principally specialized characters of these vacuum spaces, and denote them by $\chi(\Omega(s_0,s_1))$. We call $\chi(\Omega(s_0,s_1))$ the \emph{principal character} of $L(s_0,s_1)$.
We have the following formula:
\begin{align}
\chi(\Omega(s_0,s_1))
= \frac{(q^{s_1+1},q^{s_0+s_1+2},q^{s_0+2s_1+3};q^{s_0+2s_1+3})_{\infty}
(q^{s_0+1},q^{s_0+4s_1+5};q^{2(s_0+2s_1+3)})_{\infty}}{(q)_{\infty}}.
\end{align}
Observe that this character is a product that is ``periodic'' modulo $2(s_0+2s_1+3)=2\ell+6$. We thus say that
the modulus of $\chi(\Omega(s_0,s_1))$ is $2\ell+6$.
Importantly, this character can be obtained by making the following substitution in QTPI \eqref{eqn:QTPI} and dividing by $(q)_{\infty}$: 
\begin{align}
\chi(\Omega(s_0,s_1))
= \frac{Q(q^{s_0+2s_1+3},q^{-s_1-1})}{(q)_{\infty}}
= \frac{Q(q^{\ell+3},q^{-s_1-1})}{(q)_{\infty}}.
\label{eqn:qtpisubs}
\end{align}

With $k\geq 1$, the Bailey pairs mentioned above give rise to the following characters.
\renewcommand{\arraystretch}{1.5}
\begin{longtable}{|c||c||c|c|c||c||c|}
\caption{Modules and Identities}
\label{table:ids}
\endfirsthead
\endhead
\hline
Pair \# & Moves	& Level & \# of Modules & Modulus & $i$ & $s_1$ \\
\hline\hline
1  & \ref{sec:lim1} & $6k+1$ & $3k+1$ & $12k+8$ & $0\leq i\leq 3k$ & $s_1=i$\\
1  & \ref{sec:lim3} & $6k-2$ & $3k$ & $12k+2$ & $0\leq i\leq 3k-1$ & $s_1=i $ \\
\hline
2  & \ref{sec:lim1} & $6k+1$ & $3k+1$ & $12k+8$ & $0\leq i\leq 3k$ & $s_1=3k-i$\\
2  & \ref{sec:lim2} & $6k-2$ & $3k$ & $12k+2$ & $0\leq i\leq 3k-1$ & $s_1=3k-1-i$\\
\hline
3  & \ref{sec:lim1} & $6k-1$ & $3k$ & $12k+4$ & $0\leq i\leq 3k-1$ & $s_1=i$\\
3  & \ref{sec:lim3} & $6k-4$ & $3k-1$ & $12k-2$ & $0\leq i\leq 3k-2$ & $s_1=i$\\
\hline
4  & \ref{sec:lim1} & $6k-1$ & $3k$ & $12k+4$ & $0\leq i\leq 3k-1$ & $s_1=3k-1-i$\\
4  & \ref{sec:lim2} & $6k-4$ & $3k-1$ & $12k-2$ & $0\leq i\leq 3k-2$ & $s_1=3k-2-i$\\
\hline
5  & \ref{sec:lim1} & $6k$ & $3k+1$ & $12k+6$ & $0\leq i\leq 3k$ & $s_1=i$\\
5 & \ref{sec:lim3} & $6k-3$ & $3k-1$ & $12k$ & $0\leq i\leq 3k-2$ & $s_1=i$\\
\hline	
\end{longtable}
\renewcommand{\arraystretch}{1}

We now show that we indeed get the promised characters via our Bailey pairs. We shall ignore the $1/(a)_{\infty}$ factor from 
\eqref{eqn:lim1prod}, \eqref{eqn:F2endi=0}, \eqref{eqn:F2endi=1}, \eqref{eqn:F2endi>0}, \eqref{eqn:F2begprod}, etc.

Consider the first pair (base is $a=q$) with moves in Section \ref{sec:lim1}.
Equation \eqref{eqn:lim1prod} gives:
\begin{align}
&\sum_{t\geq 0}(q^{kt^2+kt-it}-q^{kt^2+t(i+2)+kt+i+1})\widetilde{\alpha}_t
=\left(\sum_{
	\substack{t=3n\\n\geq 0 }}
+\sum_{
	\substack{t=3n-1\\n\geq 1 }}\right)(q^{kt^2+kt-it}-q^{kt^2+t(i+2)+kt+i+1})\widetilde{\alpha}_t
\nonumber\\
&=\sum_{\substack{n\geq 0 }}
(q^{9kn^2-3in+3kn}-q^{9kn^2+3in+3kn+i+6n+1})
+
\sum_{\substack{n\geq 1}}
(-q^{9kn^2-3in-3kn+i}+q^{9kn^2+3in-3kn+6n-1})
\nonumber\\
&=Q(q^{6k+4},q^{-i-1})\quad\quad\dots (\mathrm{by}\,\,\eqref{eqn:QTPI-II}),
\end{align}
as required by \eqref{eqn:qtpisubs}. Similar calculations work for the third and fifth pair while using Section \ref{sec:lim1}.

Consider the second pair (base is $a=q^2$) with moves in Section \ref{sec:lim1}.
The product side is:
\begin{align}
&\sum_{t\geq 0}(q^{2kt+kt^2-it}-q^{2(kt+i+1)+kt^2+t(i+2)})\widetilde{\alpha}_t
=\left(\sum_{
	\substack{t=3n\\n\geq 0 }}
+\sum_{
	\substack{t=3n+1\\n\geq 0 }}\right)(q^{2kt+kt^2-it}-q^{2(kt+i+1)+kt^2+t(i+2)})\widetilde{\alpha}_t
\nonumber\\
&=
\sum_{
	\substack{n\geq 0 }}
(q^{9kn^2-3in+6kn+6n^2+n}-q^{9kn^2+3in+6kn+6n^2+2i+7n+2})\nonumber\\
&\quad\quad
+
\sum_{
	\substack{n\geq 0 }}
(-q^{9kn^2-3in+12kn+6n^2-i+3k+5n+1}+q^{9kn^2+3in+12kn+6n^2+3i+3k+11n+5})
\nonumber\\
&=Q(q^{6k+4},q^{-3k+i-1})\quad\quad\dots (\mathrm{by}\,\,\eqref{eqn:QTPI-III}),
\end{align}
as required by \eqref{eqn:qtpisubs}. Similar calculations work for the fourth pair while using Section \ref{sec:lim1}.

Consider first pair with moves in Section \ref{sec:lim3}. The base is $a=q$. We have, from \eqref{eqn:F2begprod} with $a\mapsto q$:
\begin{align}
&\sum_{t\geq 0}
q^{kt+ kt^2 -it-\frac{t^2+t}{2}}\left[1-q^{(2t+1)(i+1)}\right]\widetilde{\alpha}_t\nonumber\\
&=\left(\sum_{\substack{t=3n,\\ n\geq 0}}+\sum_{\substack{t=3n-1,\\ n\geq 1}}\right)
\left(q^{kt+ kt^2 -it-\frac{t^2+t}{2}} -q^{kt+ kt^2 -it-\frac{t^2+t}{2}+(2t+1)(i+1)}\right)\widetilde{\alpha}_t
\\
&=\sum_{n\geq 0} (q^{3nk+9kn^2-3ni+(3/2)n^2-(5/2)n}-q^{3nk+9kn^2+3ni+(3/2)n^2+(7/2)n+i+1})\nonumber\\
&\quad\quad + \sum_{n\geq 1} (-q^{-3nk+9kn^2-3ni+i+(3/2)n^2-(7/2)n+1} + q^{-3nk+9kn^2+3ni+(3/2)n^2+(5/2)n}) \nonumber\\
&=Q(q^{6k+1},q^{-i-1})\quad\quad\dots(\mathrm{by}\,\,\eqref{eqn:QTPI-II}),
\end{align}
as required. Similar calculations are used for third and fifth pair with Section \ref{sec:lim3}.

Consider the second pair (base is $a=q^2$) with moves in Section \ref{sec:lim2}.
Equation \eqref{eqn:F2endi>0}, after some simplification becomes:
\begin{align}
&\sum_{t\geq 0} q^{2kt + kt^2 - it - \frac{t^2+t}{2}}(1-q^{2i+2+t-1+2it})\widetilde{\alpha}_t
= \left(\sum_{
	\substack{t=3n\\n\geq 0 }}
+\sum_{
	\substack{t=3n+1\\n\geq 0 }}\right)
q^{2kt + kt^2 - it - \frac{t^2+t}{2}}(1-q^{2i+2+t-1+2it})\widetilde{\alpha}_t\nonumber\\
&=\sum_{n\geq 0}(q^{6nk+9kn^2-3ni+(3/2)n^2-(1/2)n}-q^{6nk+9kn^2+3ni+(3/2)n^2+(5/2)n+2i+1})\nonumber\\
&\quad\quad + \sum_{n\geq 0}(q^{12nk+3k+9kn^2-3ni-i+(3/2)n^2+(1/2)n}-q^{12nk+3k+9kn^2+3ni+3i+(3/2)n^2+(7/2)n+2})\nonumber\\
&=Q(q^{6k+1},q^{-3k+i})\quad\quad\dots(\mathrm{by}\,\,\eqref{eqn:QTPI-III}).
\end{align}
Similar calculation works for the fourth pair used in conjunction with Section \ref{sec:lim2}.

\section{Examples}
In this section, we demonstrate examples for levels $2,\dots, 7$. We leave some exponents un-simplified to better explain how they have been obtained.
Some sides could be simplified using the following lemma; recall Remarks  \ref{rem:b1bc1}, \ref{rem:f2b1}:
\begin{lem}
	For fixed $j_1\geq j_3\geq 0$, we have:
	\begin{align}
\sum_{j2:\,\, j_1\geq j_2\geq j_3}(-1)^{j_2+j_3}\frac{q^{-j_2}}{(q)_{j_1-j_2}} \frac{q^{\binom{j_2-j_3}{2}}}{(q)_{j_2-j_3}}
&=\frac{1}{q^{j_1}}\cdot\begin{cases}-1 & j_1=j_3+1\\
1 & j_1=j_3\\
0 & \mathrm{otherwise}
\end{cases}
\label{eqn:b1bc1}.\\
\sum_{j2:\,\, j_1\geq j_2\geq j_3}(-1)^{j_2}\frac{q^{\binom{j_1-j_2}{2}}}{(q)_{j_1-j_2}(q)_{j_2-j_3}(-a)_{j_2}}
&=(-1)^{j_3}\frac{a^{j_1-j_3}q^{\binom{j_1-j_3}{2} + \binom{j_1}{2} - \binom{j_3}{2}} }{(-a)_{j_1}(q)_{j_1-j_3}}
\label{eqn:f2b1}.
\end{align}
\end{lem}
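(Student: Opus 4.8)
The plan is to prove each of the two identities separately as finite $q$-hypergeometric summations over the single index $j_2$, treating $j_1$ and $j_3$ as fixed parameters with $j_1 \geq j_3$. Both sums telescope or reduce to a known terminating evaluation, so the approach is to recognize each left-hand side as an instance of a standard $q$-binomial identity after extracting the factors that do not depend on $j_2$.

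For the first identity \eqref{eqn:b1bc1}, I would substitute $s = j_2 - j_3$ so the sum runs over $0 \leq s \leq j_1 - j_3$, and pull out the overall sign $(-1)^{j_3}$ and the factor $q^{-j_3}$. What remains is $\sum_s (-1)^s q^{-s} q^{\binom{s}{2}} / \big((q)_{j_1 - j_3 - s}(q)_s\big)$, up to the global $q^{-j_3}$. Multiplying and dividing by $(q)_{j_1-j_3}$ turns the summand into a $q$-binomial coefficient $\binom{j_1-j_3}{s}_q$ times $(-1)^s q^{\binom{s}{2}} q^{-s}$, and the sum becomes $\frac{1}{(q)_{j_1-j_3}}\sum_s (-1)^s q^{\binom{s}{2}} q^{-s} \binom{j_1-j_3}{s}_q$. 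This is exactly the $q$-binomial theorem $\sum_s (-1)^s q^{\binom{s}{2}} x^s \binom{N}{s}_q = (x;q)_N$ specialized to $x = q^{-1}$ and $N = j_1 - j_3$. Since $(q^{-1};q)_N = (1-q^{-1})(1-1)\cdots = 0$ as soon as $N \geq 2$ (the factor $1 - q^{-1}\cdot q = 0$ appears), the sum vanishes unless $N \in \{0,1\}$, and the two surviving cases $N=0$ and $N=1$ give the claimed values $q^{-j_1}$ and $-q^{-j_1}$ after reinstating the prefactors. The main thing to check carefully is the bookkeeping of the $q^{-j_2} = q^{-s}q^{-j_3}$ factor and confirming it combines with $q^{-j_3}$ to produce the stated $q^{-j_1}$ in each nonzero case.

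For the second identity \eqref{eqn:f2b1}, the strategy is the same in spirit but the target is a nonzero closed form rather than a vanishing result, so I expect this to be the harder of the two. Again I would set $s = j_2 - j_3$, isolate the factor $(-1)^{j_3}$ from $(-1)^{j_2}$, and rewrite the summand using $q^{\binom{j_1-j_2}{2}} = q^{\binom{(j_1-j_3)-s}{2}}$ and the fact that $(-a)_{j_2} = (-a)_{j_3}(-aq^{j_3})_{s}$. The presence of $(-a)_{j_2}$ in the denominator means this is genuinely a ${}_2\phi_1$-type evaluation rather than a plain $q$-binomial theorem; the natural tool is the $q$-Chu--Vandermonde summation (or Gauss's $q$-analogue), which evaluates a terminating ${}_2\phi_1$ at a specific argument in closed form. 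The obstacle will be matching the exact shape of the numerator exponent $\binom{j_1-j_2}{2}$ and the denominator factors against the hypergeometric normalization so that the parameters of $q$-Chu--Vandermonde are read off correctly.

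Concretely, after the substitution I would aim to write the left-hand side as $\frac{(-1)^{j_3}}{(q)_{j_1-j_3}(-a)_{j_3}} \sum_s (-1)^s q^{\binom{(j_1-j_3)-s}{2}} \binom{j_1-j_3}{s}_q \frac{1}{(-aq^{j_3})_s}$ and then apply the terminating $q$-Vandermonde identity to collapse the $s$-sum. Verifying that the resulting product matches the claimed right-hand side $(-1)^{j_3} a^{j_1-j_3} q^{\binom{j_1-j_3}{2}+\binom{j_1}{2}-\binom{j_3}{2}}/\big((-a)_{j_1}(q)_{j_1-j_3}\big)$ then amounts to simplifying shifted $q$-Pochhammer symbols, using $(-a)_{j_1} = (-a)_{j_3}(-aq^{j_3})_{j_1-j_3}$, and reconciling the exponent of $q$; this last exponent check, where the three binomial coefficients $\binom{j_1-j_3}{2}$, $\binom{j_1}{2}$, $\binom{j_3}{2}$ must emerge, is precisely where I would spend the most care. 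Since both identities are finite and single-variable, induction on $j_1 - j_3$ is a reliable fallback if the direct hypergeometric matching proves fiddly: the base case $j_1 = j_3$ is immediate, and the inductive step reduces to a two-term recurrence in $j_1$ that can be verified by the standard $(q)_{n} = (1-q^n)(q)_{n-1}$ and $(-a)_n = (1+aq^{n-1})(-a)_{n-1}$ splittings.
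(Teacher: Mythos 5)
Your proposal is correct, and it splits into two halves of different character. For \eqref{eqn:b1bc1} you do exactly what the paper does: after the shift $s=j_2-j_3$ the sum becomes $\frac{q^{-j_3}}{(q)_{N}}\sum_s(-1)^sq^{\binom{s}{2}}q^{-s}\binom{N}{s}_q$ with $N=j_1-j_3$, and the finite $q$-binomial theorem \cite[Eq.\ 3.3.6]{And-book} at argument $q^{-1}$ gives $(q^{-1};q)_N$, which contains the factor $1-q^{-1}q=0$ once $N\geq 2$ and yields $q^{-j_1}$, $-q^{-j_1}$ in the cases $N=0,1$; this is precisely the paper's one-line argument. For \eqref{eqn:f2b1}, however, your route is genuinely different from the paper's. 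The paper does not evaluate the sum directly: it restates the identity as the two-parameter claim \eqref{eqn:f2b1other} and verifies that both sides satisfy the recurrence $f(N+1,t)=f(N,t+1)-q^Nf(N,t)$, $f(0,t)=1/(-a)_t$, using the $q$-Pascal rule for Gaussian polynomials --- an argument that requires knowing the answer in advance. You instead evaluate the sum in closed form: with $(-a)_{j_2}=(-a)_{j_3}(-aq^{j_3})_s$ and the exponent simplification $\binom{N-s}{2}+Ns-\binom{s}{2}=\binom{N}{2}+s$, your inner sum is $q^{\binom{N}{2}}\,{}_2\phi_1(q^{-N},0;-aq^{j_3};q,q)$. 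One caveat you should make explicit: this is not the vanilla $q$-Chu--Vandermonde (both numerator parameters nonzero) but its degenerate limit ${}_2\phi_1(q^{-N},0;c;q,q)=(-c)^Nq^{\binom{N}{2}}/(c;q)_N$, obtained by letting $b\to 0$ in ${}_2\phi_1(q^{-N},b;c;q,q)=(c/b;q)_N\,b^N/(c;q)_N$; a literal match to the standard form will not occur. With that form the computation closes: $(-a)_{j_3}(-aq^{j_3})_N=(-a)_{j_1}$ and $\binom{N}{2}+Nj_3=\binom{j_1}{2}-\binom{j_3}{2}$ reproduce the stated right-hand side. What each approach buys: the paper's recurrence proof is self-contained and uses nothing beyond Gaussian-polynomial identities, but is purely verificational; your summation derives the closed form from scratch and explains structurally where the exponent $\binom{j_1-j_3}{2}+\binom{j_1}{2}-\binom{j_3}{2}$ comes from, at the cost of invoking a (limiting case of a) standard hypergeometric summation. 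Finally, note that your fallback induction on $j_1-j_3$ with $j_3$ fixed would not close as stated, because the natural recurrence shifts $j_3$ as well; you would be forced into exactly the paper's two-parameter formulation \eqref{eqn:f2b1other}.
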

\begin{proof}
Equation \eqref{eqn:b1bc1} is immediate from the $q$-binomial theorem, \cite[Eq.\ 3.3.6]{And-book}.

For \eqref{eqn:f2b1}, we prove that for all $N,t\geq 0$,
\begin{align}
\sum_{i=0}^N(-1)^i\frac{q^{\binom{i}{2}}(q)_{N}}{(q)_i(q)_{N-i}(-a)_{N-i+t}}=(-1)^N\frac{a^Nq^{N(N+t-1)}}{(-a)_{N+t}}.
\label{eqn:f2b1other}
\end{align}
Both sides are solutions to the recurrence:
\begin{align}
f(N+1,t) = f(N,t+1)-q^Nf(N,t),\quad f(0,t)=\frac{1}{(-a)_t}.
\end{align}
Indeed, the recurrence for the left-hand side of \eqref{eqn:f2b1other} could be deduced easily from the properties of Gaussian polynomials \cite[Eqn.\ 3.3.3]{And-book},
and the recurrence for the right-hand side is straightforward.
Now \eqref{eqn:f2b1} follows from \eqref{eqn:f2b1other} by taking $i=j_1-j_2$, $t=j_3$, $N=j_1-j_3$. 
\end{proof}

\subsection{Level $2$}

Using the third pair and Section \ref{sec:lim3}, with $a=q$, $k=1$, $i=0,1$, we get:
\begin{align}
&\frac{1}{(-q)_{\infty}}\sum_{j_1\geq 0} \frac{q^{j_1}q^{\binom{j_1}{2}} (-q)_{j_1}q^{j_1^2-j_1}}{(q)_{2j_1}}
= \frac{(q,q^4,q^5;\,\,q^5)_{\infty}(q^3,q^7;\,\,q^{10})_\infty}{(q)_{\infty}},\\
&\sum_{j_1\geq j_2\geq j_3\geq 0}
(-1)^{j_1+j_2}\frac{(-q)_{j_3}q^{j_3}q^{-j_1+\binom{j_1-j_2}{2} +\binom{j_3}{2}}}{(q)_{j_1-j_2}(q)_{j_2-j_3}(-q)_{j_2}}
\frac{q^{j_3^2-j_3}}{(q)_{2j_3}}
=
\frac{(q^2,q^3,q^5;\,\,q^5)_{\infty}(q^1,q^{9};\,\,q^{10})_\infty}{(q)_{\infty}}.
\end{align}
The triple-sum here simplifies, using \eqref{eqn:f2b1}:
\begin{align}
&\sum_{j_1\geq j_3\geq 0}
(-1)^{j_1+j_3}\frac{q^{\binom{j_1}{2}+\binom{j_1-j_3}{2}}(-q)_{j_3}}{(q)_{j_1-j_3}(-q)_{j_1}}
\frac{q^{j_3^2-j_3}}{(q)_{2j_3}}
=
\frac{(q^2,q^3,q^5;\,\,q^5)_{\infty}(q^1,q^{9};\,\,q^{10})_\infty}{(q)_{\infty}}.
\end{align}

If we instead use the fourth pair with Section \ref{sec:lim2},
with $a=q^2$, $k=1$, $i=0,1$:
\begin{align}
\sum_{j_1\geq 0}\frac{(-q)_{j_1}q^{2j_1+\binom{j_1}{2}}}{(-q^2)_{\infty}}\frac{q^{j_1^2}}{(q^2;q)_{2j_1}}
=(1-q^2)\frac{(q^2,q^3,q^5;\,\,q^5)_{\infty}(q^1,q^{9};\,\,q^{10})_\infty}{(q)_{\infty}},\\
\sum_{j_1\geq 0}\frac{(-q)_{j_1} q^{j_1+\binom{j_1}{2}}}{(-q)_{\infty}} \frac{q^{j_1^2}}{(q^2;q)_{2j_1}}
=(1-q)\frac{(q^1,q^4,q^5;\,\,q^5)_{\infty}(q^3,q^{7};\,\,q^{10})_\infty}{(q^2;q)_{\infty}}.
\end{align}

\subsection{Level $3$}
At this level, we get $q$-series for Capparelli's identities \cite{Cap-1}.
We use the fifth Bailey pair and Section \ref{sec:lim3}.
With $a=q$, $k=1$ and $i=0,1$ we get:
\begin{align}
&\frac{1}{(-q)_{\infty}}\sum_{j_1\geq 0} \frac{q^{j_1}q^{\binom{j_1}{2}} (-q)_{j_1}(-1;q^3)_{j_1}}{(q)_{2j_1}(-1;q)_{j_1}}
= \frac{1}{(-q)_{\infty}}
\left( 1 + 
\sum_{j_1\geq 1} 
\frac{q^{j_1}q^{\binom{j_1}{2}} (1+q^{j_1})(-q^3;q^3)_{j_1-1}}{(q;q)_{2j_1}}
\right)\nonumber\\
&\quad\quad\quad= \frac{(q,q^5,q^6;\,\,q^6)_{\infty}(q^4,q^8;\,\,q^{12})_\infty}{(q)_{\infty}},\\
&\sum_{j_1\geq j_2\geq j_3\geq 0}
(-1)^{j_1+j_2}\frac{(-q)_{j_3}q^{j_3}q^{-j_1+\binom{j_1-j_2}{2} +\binom{j_3}{2}}}{(q)_{j_1-j_2}(q)_{j_2-j_3}(-q)_{j_2}}
\frac{(-1;q^3)_{j_3}}{(q;q)_{2j_3}(-1;q)_{j_3}}
=
\frac{(q^2,q^4,q^6;\,\,q^6)_{\infty}(q^2,q^{10};\,\,q^{12})_\infty}{(q)_{\infty}}.
\end{align}
The triple-sum here simplifies, using \eqref{eqn:f2b1}:
\begin{align}
&\sum_{j_1\geq j_3\geq 0}
(-1)^{j_1+j_3}\frac{q^{\binom{j_1}{2}+\binom{j_1-j_3}{2}}(-q)_{j_3}}{(q)_{j_1-j_3}(-q)_{j_1}}
\frac{(-1;q^3)_{j_3}}{(q)_{2j_3}(-1;q)_{j_3}}
=
\frac{(q^2,q^4,q^6;\,\,q^6)_{\infty}(q^2,q^{10};\,\,q^{12})_\infty}{(q)_{\infty}}.
\end{align}

\subsection{Level 4} At this level, we get $q$-series related to Nandi's identities, \cite{Nan-thesis}, \cite{TakTsu-nandi}. Compare these identities with \cite{Sla-ids}.
We use the first Bailey pair and Section \ref{sec:lim3}.
With $a=q$, $k=1$ and $i=0,1,2$ we get:
\begin{align}
&\frac{1}{(-q)_{\infty}}\sum_{j_1\geq 0} \frac{q^{j_1}q^{\binom{j_1}{2}} (-q)_{j_1}}{(q)_{2j_1}}
= \frac{(q,q^6,q^7;\,\,q^7)_{\infty}(q^5,q^9;\,\,q^{14})_\infty}{(q)_{\infty}},\\
&\sum_{j_1\geq j_2\geq j_3\geq 0}
(-1)^{j_1+j_2}\frac{(-q)_{j_3}q^{j_3}q^{-j_1+\binom{j_1-j_2}{2} +\binom{j_3}{2}}}{(q)_{j_1-j_2}(q)_{j_2-j_3}(-q)_{j_2}}
\frac{1}{(q;q)_{2j_3}}
=
\frac{(q^2,q^5,q^7;\,\,q^7)_{\infty}(q^3,q^{11};\,\,q^{14})_\infty}{(q)_{\infty}},\\
&\sum_{j_1\geq\cdots\geq j_5\geq 0}
(-1)^{j_2+j_4}\frac{(-q)_{j_5} q^{j_1-j_3+j_5}q^{j_1^2-j_3^2-(j_1+j_2) + \binom{j_2-j_3}{2}+ \binom{j_3-j_4}{2} +  \binom{j_5}{2} }}
{(q)_{j_1-j_2}(q)_{j_2-j_3}(q)_{j_3-j_4}(q)_{j_4-j_5}(-q)_{j_4}}
\frac{1}{(q;q)_{2j_5}}\nonumber\\
&\quad\quad\quad=
\frac{(q^3,q^4,q^7;\,\,q^7)_{\infty}(q^1,q^{13};\,\,q^{14})_\infty}{(q)_{\infty}}.
\end{align}
The triple-sum simplifies, using \eqref{eqn:f2b1}:
\begin{align}
&
\sum_{j_1\geq j_3\geq 0}
(-1)^{j_1+j_3}\frac{q^{\binom{j_1}{2}+\binom{j_1-j_3}{2}}(-q)_{j_3}}{(q)_{j_1-j_3}(-q)_{j_1}}
\frac{1}{(q)_{2j_3}}
=
\frac{(q^2,q^5,q^7;\,\,q^7)_{\infty}(q^3,q^{11};\,\,q^{14})_\infty}{(q)_{\infty}}.
\end{align}
The quintuple-sum simplifies. The sum over $j_4$ is handled by \eqref{eqn:f2b1}, and then the sum over $j_2$ by \eqref{eqn:b1bc1}.
\begin{align}
&\sum_{j_1\geq j_2\geq j_3\geq j_5\geq 0}
(-1)^{j_2+j_5}\frac{(-q)_{j_5}q^{j_1^2-j_3^2-j_2+\binom{j_2-j_3}{2}+\binom{j_3-j_5}{2}+\binom{j_3}{2} }}{(q)_{j_1-j_2}(q)_{j_2-j_3}(q)_{j_3-j_5}(-q)_{j_3}}
\frac{1}{(q)_{2j_5}}
\nonumber\\
&
=
\sum_{j_3\geq j_5\geq 0}
(-1)^{j_3+j_5}(-q^{j_3}+q^{-j_3})\frac{(-q)_{j_5}q^{\binom{j_3-j_5}{2} +\binom{j_3}{2}}}{(q)_{j_3-j_5}(-q)_{j_3}}\frac{1}{(q)_{2j_5}}
\nonumber\\
&=
\frac{(q^2,q^5,q^7;\,\,q^7)_{\infty}(q^3,q^{11};\,\,q^{14})_\infty}{(q)_{\infty}}.
\end{align}

If we use the second Bailey pair and Section \ref{sec:lim2} and $a=q^2$, $k=1$ and $i=0,1,2$ we get:
\begin{align}
&\sum_{j_1\geq 0}
\frac{(-q)_{j_1}q^{2j_1 + \binom{j_1}{2}} }{(-q^2;q)_\infty}\frac{1}{(q^2;q)_{2j_1}} =
(1-q^2)\frac{(q^3,q^4,q^7;\,\,q^7)_{\infty}(q^1,q^{13};\,\,q^{14})_\infty}{(q)_{\infty}},\\
&\sum_{j_1\geq 0}\frac{(-q)_{j_1}q^{j_1+\binom{j_1}{2}}}{(-q)_{\infty}}\frac{1}{(q^2;q)_{2j_1}}
=
(1-q)\frac{(q^2,q^5,q^7;\,\,q^7)_{\infty}(q^3,q^{11};\,\,q^{14})_\infty}{(q)_{\infty}},\\
&\sum_{j_1\geq j_2\geq j_3\geq 0}
(-1)^{j_2+j_3}
\frac{(-q)_{j_1}q^{2j_1 + \binom{j_1}{2} -(j_1+j_2)+\binom{j_2-j_3}{2}}}{(-q)_{\infty}(q)_{j_1-j_2}(q)_{j_2-j_3}}
\frac{1}{(q^2;q)_{2j_3}}
=(1-q)\frac{(q,q^6,q^7;\,\,q^7)_{\infty}(q^5,q^9;\,\,q^{14})_\infty}{(q)_{\infty}}.
\end{align}
The triple-sum simplifies, using \eqref{eqn:b1bc1}:
\begin{align}
\sum_{j_3\geq 0}
\frac{(-q)_{j_3}q^{\binom{j_3}{2}}}{(-q)_{\infty}}[1-q^{j_3}-q^{2j_3+1}]
\frac{1}{(q^2;q)_{2j_3}}
=(1-q)\frac{(q,q^6,q^7;\,\,q^7)_{\infty}(q^5,q^9;\,\,q^{14})_\infty}{(q)_{\infty}}.
\end{align}

\subsection{Level 5}
Compare the identities here with \cite{Sla-ids}, \cite{TakTsu-A22A132} and \cite{AndEkeHel-vir34}.
We use the third pair with base $a=q$, moves in Section \ref{sec:lim1} and $k=1$, $i=0,1,2$:
\begin{align}
&\sum_{j_1\geq 0}{q^{j_1+j_1^2}}\frac{q^{j_1^2-j_1}}{(q)_{2j_1}}=
\frac{(q,q^7,q^8;\,\,q^8)_{\infty}(q^6,q^{10};\,\,q^{16})_\infty}{(q)_{\infty}},\\
&\sum_{j_1\geq 0}{q^{j_1^2}}\frac{q^{j_1^2-j_1}}{(q)_{2j_1}}=
\frac{(q^2,q^6,q^8;\,\,q^8)_{\infty}(q^4,q^{12};\,\,q^{16})_\infty}{(q)_{\infty}},\\
&\sum_{j_1\geq j_2\geq j_3\geq 0}
(-1)^{j_2+j_3}\frac{q^{j_1+j_1^2-(j_1+j_2)+\binom{j_2-j_3}{2}}}{(q)_{j_1-j_2}(q)_{j_2-j_3}}
\frac{q^{j_3^2-j_3}}{(q)_{2j_3}}
=
\frac{(q^3,q^5,q^8;\,\,q^8)_{\infty}(q^2,q^{10};\,\,q^{16})_\infty}{(q)_{\infty}}.
\end{align}
The triple-sum simplifies using \eqref{eqn:b1bc1}:
\begin{align}
\sum_{j_3\geq 0}
q^{j_3^2-j_3}(1-q^{2j_3})
\frac{q^{j_3^2-j_3}}{(q)_{2j_3}}
=
\sum_{j_3\geq 0}
\frac{q^{2(j_3^2+j_3)}}{(q)_{2j_3+1}}
=
\frac{(q^3,q^5,q^8;\,\,q^8)_{\infty}(q^2,q^{10};\,\,q^{16})_\infty}{(q)_{\infty}}.
\end{align}

If instead we use the fourth pair with base $a=q^2$, Section \ref{sec:lim1} and $k=1$, $i=0,1,2$, we get:
\begin{align}
&\sum_{j_1\geq 0}{q^{2j_1+j_1^2}}
\frac{q^{j_1^2}}{(q^2;q)_{2j_1}}=
(1-q)\frac{(q^3,q^5,q^8;\,\,q^8)_{\infty}(q^2,q^{10};\,\,q^{16})_\infty}{(q)_{\infty}},\\
&\sum_{j_1\geq 0}{q^{j_1+j_1^2}}
\frac{q^{j_1^2}}{(q^2;q)_{2j_1}}=
(1-q)\frac{(q^2,q^6,q^8;\,\,q^8)_{\infty}(q^4,q^{12};\,\,q^{16})_\infty}{(q)_{\infty}},\\
&\sum_{j_1\geq j_2\geq j_3\geq 0}
(-1)^{j_2+j_3}\frac{q^{2j_1+j_1^2-(j_1+j_2)+\binom{j_2-j_3}{2}}}{(q)_{j_1-j_2}(q)_{j_2-j_3}}
\frac{q^{j_3^2}}{(q^2;q)_{2j_3}}
=
(1-q)\frac{(q,q^7,q^8;\,\,q^8)_{\infty}(q^6,q^{10};\,\,q^{16})_\infty}{(q)_{\infty}}.
\end{align}
The triple-sum simplifies, using \eqref{eqn:b1bc1}:
\begin{align}
\sum_{j_3\geq 0}
q^{j_3^2}(1-q^{2j_3+1})
\frac{q^{j_3^2}}{(q^2;q)_{2j_3}}
= (1-q) + \sum_{j_3\geq 1}
\frac{q^{2j_3^2}}{(q^2;q)_{2j_3-1}}
=
(1-q)\frac{(q,q^7,q^8;\,\,q^8)_{\infty}(q^6,q^{10};\,\,q^{16})_\infty}{(q)_{\infty}}.
\end{align}

\subsection{Level 6} Compare the identities here with \cite{McLSil-1824}.
We use the fifth pair with base $a=q$, moves in Section \ref{sec:lim1} and $k=1$, $i=0,1,2,3$:
\begin{align}
&\sum_{j_1\geq 0}{q^{j_1+j_1^2}}
\frac{(-1;q^3)_{j_1}}{(q)_{2j_1}(-1;q)_{j_1}}=
\frac{(q,q^8,q^9;\,\,q^9)_{\infty}(q^7,q^{11};\,\,q^{18})_\infty}{(q)_{\infty}},\\
&\sum_{j_1\geq 0}{q^{j_1^2}}
\frac{(-1;q^3)_{j_1}}{(q)_{2j_1}(-1;q)_{j_1}}=
\frac{(q^2,q^7,q^9;\,\,q^9)_{\infty}(q^5,q^{13};\,\,q^{18})_\infty}{(q)_{\infty}},\\
&\sum_{j_1\geq j_2\geq j_3\geq 0}
(-1)^{j_2+j_3}\frac{q^{j_1+j_1^2-(j_1+j_2)+\binom{j_2-j_3}{2}}}{(q)_{j_1-j_2}(q)_{j_2-j_3}}
\frac{(-1;q^3)_{j_3}}{(q)_{2j_3}(-1;q)_{j_3}}
=
\frac{(q^3,q^6,q^9;\,\,q^9)_{\infty}(q^3,q^{15};\,\,q^{18})_\infty}{(q)_{\infty}},\\
&\sum_{j_1\geq\cdots\geq j_5\geq 0}
(-1)^{j_3+j_5}
\frac{q^{j_1+j_2-j_4+j_1^2+j_2^2-j_4^2-(j_1+j_2+j_3)+\binom{j_3-j_4}{2} + \binom{j_4-j_5}{2}}}
{(q)_{j_1-j_2}(q)_{j_2-j_3}(q)_{j_3-j_4}(q)_{j_4-j_5}}
\frac{(-1;q^3)_{j_5}}{(q)_{2j_5}(-1;q)_{j_5}}\nonumber\\
&\quad\quad=
\frac{(q^4,q^5,q^9;\,\,q^9)_{\infty}(q^1,q^{17};\,\,q^{18})_\infty}{(q)_{\infty}}.
\end{align}
The triple-sum simplifies, using \eqref{eqn:b1bc1}:
\begin{align}
\sum_{ j_3\geq 0}
q^{j_3^2-j_3}(1-q^{2j_3})
\frac{(-1;q^3)_{j_3}}{(q)_{2j_3}(-1;q)_{j_3}}
=
\sum_{ j_3\geq 0}
q^{j_3^2+j_3}
\frac{(-q^3;q^3)_{j_3}}{(q)_{2j_3+1}(-q;q)_{j_3}}
=
\frac{(q^3,q^6,q^9;\,\,q^9)_{\infty}(q^3,q^{15};\,\,q^{18})_\infty}{(q)_{\infty}}.
\end{align}
The quintuple-sum simplifies after using \eqref{eqn:b1bc1} for the sum over $j_3$ and a bit of algebraic manipulation:
\begin{align}
\sum_{j_1\geq j_4 \geq j_5\geq 0}
(-1)^{j_4+j_5}
\frac{(-q^{j_1^2+2j_1+1}+q^{j_1^2-2j_4})}
{(q)_{j_1-j_4}(q)_{j_4-j_5}}
\frac{(-1;q^3)_{j_5}}{(q)_{2j_5}(-1;q)_{j_5}}
=
\frac{(q^4,q^5,q^9;\,\,q^9)_{\infty}(q^1,q^{17};\,\,q^{18})_\infty}{(q)_{\infty}}.
\end{align}

\subsection{Level 7} Compare the identities here with \cite{Sla-ids} and \cite{TakTsu-A22A132}.
We use the first pair with base $a=q$, moves in Section \ref{sec:lim1} and $k=1$, $i=0,1,2,3$:
\begin{align}
&\sum_{j_1\geq 0}{q^{j_1+j_1^2}}
\frac{1}{(q)_{2j_1}}=
\frac{(q,q^9,q^{10};\,\,q^{10})_{\infty}(q^8,q^{12};\,\,q^{20})_\infty}{(q)_{\infty}},\\
&\sum_{j_1\geq 0}{q^{j_1^2}}
\frac{1}{(q)_{2j_1}}=
\frac{(q^2,q^8,q^{10};\,\,q^{10})_{\infty}(q^6,q^{14};\,\,q^{20})_\infty}{(q)_{\infty}},\\
&\sum_{j_1\geq j_2\geq j_3\geq 0}
(-1)^{j_2+j_3}\frac{q^{j_1+j_1^2-(j_1+j_2)+\binom{j_2-j_3}{2}}}{(q)_{j_1-j_2}(q)_{j_2-j_3}}
\frac{1}{(q)_{2j_3}}
=
\frac{(q^3,q^7,q^{10};\,\,q^{10})_{\infty}(q^4,q^{16};\,\,q^{20})_\infty}{(q)_{\infty}},\\
&\sum_{j_1\geq\cdots\geq j_5\geq 0}
(-1)^{j_3+j_5}
\frac{q^{j_1+j_2-j_4+j_1^2+j_2^2-j_4^2-(j_1+j_2+j_3)+\binom{j_3-j_4}{2} + \binom{j_4-j_5}{2}}}
{(q)_{j_1-j_2}(q)_{j_2-j_3}(q)_{j_3-j_4}(q)_{j_4-j_5}}
\frac{1}{(q)_{2j_5}}\nonumber\\
&\quad\quad=
\frac{(q^4,q^6,q^{10};\,\,q^{10})_{\infty}(q^2,q^{18};\,\,q^{20})_\infty}{(q)_{\infty}}.
\end{align}
The triple-sum simplifies, using \eqref{eqn:b1bc1}:
\begin{align}
\sum_{j_3\geq 0}
q^{j_3^2-j_3}(1-q^{2j_3})
\frac{1}{(q)_{2j_3}}
=
\sum_{j_3\geq 0}
\frac{q^{j_3^2+j_3}}{(q)_{2j_3+1}}
=
\frac{(q^3,q^7,q^{10};\,\,q^{10})_{\infty}(q^4,q^{16};\,\,q^{20})_\infty}{(q)_{\infty}}.
\end{align}
The quintuple-sum simplifies after using \eqref{eqn:b1bc1} for the sum over $j_3$ and a bit of algebraic manipulation:
\begin{align}
\sum_{j_1\geq j_4 \geq j_5\geq 0}
(-1)^{j_4+j_5}
\frac{(-q^{j_1^2+2j_1+1}+q^{j_1^2-2j_4})}
{(q)_{j_1-j_4}(q)_{j_4-j_5}}
\frac{1}{(q)_{2j_5}}
=
\frac{(q^4,q^6,q^{10};\,\,q^{10})_{\infty}(q^2,q^{18};\,\,q^{20})_\infty}{(q)_{\infty}}.
\end{align}

If we instead use the second pair with Section \ref{sec:lim1} and $a=q^2$, $k=1$, $i=0,1,2,3$, we get:
\begin{align}
&\sum_{j_1\geq 0}{q^{2j_1+j_1^2}}
\frac{1}{(q^2;q)_{2j_1}}=
(1-q)\frac{(q^4,q^6,q^{10};\,\,q^{10})_{\infty}(q^2,q^{18};\,\,q^{20})_\infty}{(q)_{\infty}},\\
&\sum_{j_1\geq 0}{q^{j_1+j_1^2}}
\frac{1}{(q^2;q)_{2j_1}}=
(1-q)\frac{(q^3,q^7,q^{10};\,\,q^{10})_{\infty}(q^4,q^{16};\,\,q^{20})_\infty}{(q)_{\infty}},\\
&\sum_{j_1\geq j_2\geq j_3\geq 0}
(-1)^{j_2+j_3}\frac{q^{2j_1+j_1^2-(j_1+j_2)+\binom{j_2-j_3}{2}}}{(q)_{j_1-j_2}(q)_{j_2-j_3}}
\frac{1}{(q^2;q)_{2j_3}}
=
(1-q)\frac{(q^2,q^8,q^{10};\,\,q^{10})_{\infty}(q^6,q^{14};\,\,q^{20})_\infty}{(q)_{\infty}},\\
&\sum_{j_1\geq\cdots\geq j_5\geq 0}
(-1)^{j_3+j_5}
\frac{q^{2j_1+2j_2-2j_4+j_1^2+j_2^2-j_4^2-(j_1+j_2+j_3)+\binom{j_3-j_4}{2} + \binom{j_4-j_5}{2}}}
{(q)_{j_1-j_2}(q)_{j_2-j_3}(q)_{j_3-j_4}(q)_{j_4-j_5}}
\frac{1}{(q^2;q)_{2j_5}}\nonumber\\
&\quad\quad=
(1-q)\frac{(q,q^9,q^{10};\,\,q^{10})_{\infty}(q^8,q^{12};\,\,q^{20})_\infty}{(q)_{\infty}}.
\end{align}
The triple-sum simplifies:
\begin{align}
\sum_{j_3\geq 0}
q^{j_3^2}(1-q^{2j_3+1})
\frac{1}{(q^2;q)_{2j_3}}
= (1-q) + \sum_{j_3\geq 1}
\frac{q^{j_3^2}}{(q^2;q)_{2j_3-1}}
=
(1-q)\frac{(q^2,q^8,q^{10};\,\,q^{10})_{\infty}(q^6,q^{14};\,\,q^{20})_\infty}{(q)_{\infty}}.
\end{align}
The quintuple-sum simplifies after using \eqref{eqn:b1bc1} for the sum over $j_3$ and a bit of algebraic manipulation:
\begin{align}
\sum_{j_1\geq j_4 \geq j_5\geq 0}
(-1)^{j_4+j_5}
\frac{(-q^{j_1^2+3j_1+3}+q^{j_1^2+j_1-2j_4})}
{(q)_{j_1-j_4}(q)_{j_4-j_5}}
\frac{1}{(q^2;q)_{2j_5}}
=
(1-q)\frac{(q,q^9,q^{10};\,\,q^{10})_{\infty}(q^8,q^{12};\,\,q^{20})_\infty}{(q)_{\infty}}.
\end{align}

%
\providecommand{\oldpreprint}[2]{\textsf{arXiv:\mbox{#2}/#1}}\providecommand{\preprint}[2]{\textsf{arXiv:#1
		[\mbox{#2}]}}

\end{document}